\numberwithin{equation}{section}
\DeclareMathOperator{\R}{\mathbbm{R}}
\DeclareMathOperator{\e}{e}
\renewcommand{\@biblabel}[1]{#1\hfill \hspace{-0.2cm}}
\newtheorem{theorem}{Theorem}
\newtheorem{lemma}{Lemma}
\newtheorem{remark}{Remark}
\newtheorem{proposition}{Proposition}
\begin{document}

\title{A dynamically-consistent nonstandard finite difference scheme for the SICA model}

\author{Sandra Vaz\affil{1,}\corrauth
and 
Delfim F. M. Torres\affil{2}}

\shortauthors{Author(s)}

\address{%
\addr{\affilnum{1}}{Center of Mathematics and Applications (CMA-UBI),
Department of Mathematics, University of Beira Interior, Covilh\~{a} 6201-001, Portugal}
\addr{\affilnum{2}}{Center for Research and Development in Mathematics and Applications (CIDMA),
Department of Mathematics, University of Aveiro, Aveiro 3810-193, Portugal}}

\corraddr{Email: svaz@ubi.pt; Tel: +351275242091.}

% ------------------------------------------------------

\begin{abstract}
In this work, we derive a nonstandard finite difference scheme for the SICA 
(Susceptible--Infected--Chronic--AIDS)
model and analyze the dynamical properties 
of the discretized system. We prove that the discretized model is dynamically 
consistent with the continuous, maintaining the essential properties of the 
standard SICA model, namely, the positivity and boundedness of the solutions, 
equilibrium points, and their local and global stability.
\end{abstract}

\keywords{SICA model; compartmental models; stability analysis; 
Schur--Cohn criterion; Lyapunov functions; discretization by Mickens method.}

\maketitle

% ------------------------------------------------------

\section{Introduction}

The Human Immunodeficiency Virus (HIV) is responsible 
for a very high number of deaths worldwide. 
Acquired ImmunoDeficiency Syndrome (AIDS) is a disease 
of the human immune system caused by infection with HIV. 
The HIV virus can be transmitted by several 
ways but there is no cure or vaccine for AIDS. 
Nevertheless, antiretroviral (ART) treatment improves health, 
prolongs life and reduces the risk of HIV transmission. 
The ART treatment increases life expectation but has some limitations. 
For instance, it doesn't restore health, has some side effects, and is very expensive. 
Individuals infected with HIV are more likely to develop tuberculosis because 
of their immunodeficiency, so a model that considers HIV and tuberculosis 
is very interesting to investigate. One can find many studies 
in the literature \cite{Sal1,Sal2,Elaiw}. A TB-HIV/AIDS co-infection model, 
that contains the celebrated SICA (Susceptible--Infected--Chronic--AIDS)
model as a sub-model, was first proposed in \cite{MyID:318}:
\begin{equation}
\label{eq:model}
\begin{cases}
\dot{S}(t) = \Lambda - \lambda (t) S(t) - \mu S(t),\\[0.2 cm]
\dot{I}(t) = \lambda(t) S(t) - (\rho + \phi + \mu)I(t)
+ \alpha A(t)  + \omega C(t), \\[0.2 cm]
\dot{C}(t) = \phi I(t) - (\omega + \mu)C(t),\\[0.2 cm]
\dot{A}(t) =  \rho \, I(t) - (\alpha + \mu + d) A(t),
\end{cases}
\end{equation}
where
\begin{equation}
\label{eq:lambda1}
\lambda(t) = \frac{\beta}{N(t)} \left[ I(t) + \eta_C C(t) + \eta_A A(t) \right]
\end{equation}
with
\begin{equation}
\label{pop}
N(t) = S(t) + I(t) + C(t) + A(t)
\end{equation}
the total population at time $t$.
The meaning of the parameters that appear in the SICA model
\eqref{eq:model}--\eqref{pop} are given in Table~\ref{table:parameters}.
% -------------------------------------
\begin{table}[H]
\centering
\setlength{\tabcolsep}{20mm}
\caption{Parameters of the SICA model \eqref{eq:model}--\eqref{pop}.}
\begin{tabular}{l p{6.5cm}} \hline
{{Symbol}} &  {{Description}} \\ \hline
{{$N(0)$}} & {{Initial population}} \\
{{$\Lambda$}} & {{Recruitment rate}} \\
{{$\mu$}} & {{Natural death rate}} \\
{{$\beta$}} & {{HIV transmission rate}} \\
{{$\phi$}} & {{HIV treatment rate for $I$ individuals}} \\
{{$\rho$}} & {{Default treatment rate for $I$ individuals}}\\
{{$\alpha$}} & {{AIDS treatment rate}}\\
{{$\omega$}} & {{Default treatment rate for $C$ individuals}}\\
{{$d$}} & {{AIDS induced death rate}} \\
{{$\eta_C$}} & {{Modification parameter}} \\
{{$\eta_A$}} & {{Modification parameter}} \\ \hline 
\end{tabular}
\label{table:parameters}
\end{table}
% -------------------------------------
The model considers a varying population size 
in a homogeneously mixing population, subdividing 
the human population into four mutually-exclusive 
compartments:
\begin{itemize}
\item[-] susceptible individuals ($S$);
\item[-] HIV-infected individuals with no clinical symptoms of AIDS 
(the virus is living or developing in the individuals 
but without producing symptoms or only mild ones) 
but able to transmit HIV to other individuals ($I$); 
\item[-] HIV-infected individuals under ART treatment (the so called 
chronic stage) with a viral load remaining low ($C$); 
\item[-] HIV-infected individuals with AIDS clinical symptoms ($A$).
\end{itemize}  
The SICA model has some assumptions. It can be seen in \cite{MyID:455} 
that the susceptible population is increased by the recruitment of individuals 
into the population, assumed susceptible at a rate $\Lambda$. All individuals 
suffer from natural death at a constant rate $\mu$. Susceptible individuals $S$ 
acquire HIV infection, following effective contact with people infected with HIV, 
at rate $\lambda$ \eqref{eq:lambda1}, where $\beta$ is the effective contact rate 
for HIV transmission. The modification parameter $\eta_{A} \geq 1$ accounts for 
the relative infectiousness of individuals with AIDS symptoms, in comparison 
to those infected with HIV with no AIDS symptoms. Individuals with AIDS symptoms 
are more infectious than HIV-infected individuals because they have a higher  viral 
load and there is a positive correlation between viral load and infectiousness. 
On the other hand, $\eta_{C} \leq 1$ translates the partial restoration of the 
immune function of individuals with HIV infection that use correctly ART.
The SICA mathematical model \eqref{eq:model}--\eqref{pop} is well-studied 
in the literature \cite{MyID:455}. It has shown to provide a proper description 
with respect to the HIV/AIDS situation in Cape Verde \cite{MyID:359}
and recent extensions include stochastic transmission \cite{MyID:406}
and fractional versions with memory and general incidence rates \cite{MyID:471}. 
Here our main aim is to propose, for the first time in the literature, 
a discrete-time SICA model.

For most nonlinear continuous models in engineering and natural sciences, 
it is not possible to obtain an exact solution \cite{MyID:429}, so a 
variety of methods have been constructed to compute 
numerical solutions \cite{MyID:446,MyID:447}. It is well known 
that numerical methods, like the Euler and Runge--Kutta, among others, 
often fail to solve nonlinear systems. One of the reasons is that they generate 
oscillations and unsteady states if the time step size decreases 
to a critical size \cite{Stuart:98}. Among available approaches to address the problem, 
the nonstandard finite discrete difference (NSFD) schemes, 
introduced by Mickens in \cite{Mickens:94,Mickens:02}, have been 
successfully applied to several different epidemiological models \cite{Art:1,Art:2}.   
Precisely, the NSFD schemes were created to eliminate or reduce the occurrence 
of numerical instabilities that generally arise while using other methods. 
This is possible because there are some designed laws that systems must satisfy 
in order to preserve the qualitative properties of the continuous model, such as positivity, 
boundedness, stability of the equilibrium points, conservation laws, and others \cite{Mickens:05}.
The literature on Mickens-type NSFD schemes is now vast \cite{MR4183276,MR4128034}.
The paper \cite{MR4141413} considers the NSFD method of Mickens and apply it to a dynamical system 
that models the Ebola virus disease. In \cite{MR3093413}, a NSFD scheme is designed 
in which the Metzler matrix structure of the continuous model is carefully incorporated 
and both Mickens' rules on the denominator of the discrete derivative and the nonlocal 
approximation of nonlinear terms are used. In that work the general analysis is detailed 
for a MSEIR model. In \cite{MR3575285}, the authors summarize NSFD methods  
and compare their performance for various step-sizes when applied to a specific two-sex (male/female) 
epidemic model; while in \cite{MR3316778} it is shown that Mickens' approach is qualitatively superior  
to the standard approach in constructing numerical methods with respect to productive-destructive 
systems (PDS's). NSFD schemes for PDS's  are also investigated in \cite{MR2854820}; 
NSFD methods for predator-prey models with the Beddington--De Angelis functional response 
are studied in  \cite{MR2316130}. Here we propose and investigate, for the first 
time in the literature, the dynamics of a discretized SICA model using 
the Mickens NSFD scheme.

The paper is organized as follows. Some considerations, regarding the continuous SICA model 
and the stability of discrete-time systems, are presented in section~\ref{sec2}. 
The original results are then given in section~\ref{sec3}: we start by introducing 
the discretized SICA model; we find the equilibrium points, 
prove the positivity, Theorem~\ref{positiv}, and boundedness of the solutions, 
Theorem~\ref{conservation}; we also establish the local stability 
of the disease free equilibrium point of the discrete model, Theorem \ref{local}, as well as 
the global stability of the equilibrium points, Theorems~\ref{globalDFE} and \ref{globalEE}. 
In section~\ref{sec4}, we provide some numerical simulations to illustrate the stability 
of the NSFD discrete SICA model using a case study. 
We end with section~\ref{sec5} of conclusion.

% ------------------------------------------------------

\section{Preliminaries}
\label{sec2}

In this section, we collect some preliminary results 
about the continuous SICA model \cite{MyID:318}, 
as well as results for the stability of discrete-time 
systems \cite{Elaydi:05}, useful in our work.

% ----------------------------

\subsection{The continuous SICA model}

All the information in this section is proved in \cite{MyID:318}. 
Each solution $(S(t), I(t), C(t), A(t))$ of the continuous  model 
much satisfy $S(0) \geq 0$, $I(0) \geq 0$, $C(0) \geq 0$, and $A(0) \geq 0$, 
because each equation represents groups of human beings. 
Adding the four equations of \eqref{eq:model}, one has
$$
\dfrac{d N}{d t}= \Lambda - \mu N- d A \leq \Lambda -\mu N,
$$
so that
$$
N(t) \leq \dfrac{\Lambda}{\mu} + \left(N_{0}-\dfrac{\Lambda}{\mu}\right)\e^{- \mu t}.
$$
Therefore, the biologically feasible region  is given by 
\begin{equation}
\label{eq:omega}
\Omega=\left\{ (S,I,C,A) \in (\R_0^4)^{+}:0 \leq S+I+C+A \leq \frac{\Lambda}{\mu} \right\},
\end{equation}
which is positively invariant and compact. This means that it is sufficient 
to study the qualitative dynamics in $\Omega$. The model has two equilibrium points:
a disease free and an endemic one. The disease free equilibrium (DFE) point is given by 
$$
(S^{\ast},I^{\ast},C^{\ast},A^{\ast})=\left( \frac{\Lambda}{ \mu}, 0, 0, 0\right).
$$
Following the approach of the next generation matrix \cite{Driessche:02}, 
the basic reproduction number $R_{0}$ for model \eqref{eq:model}, 
which represents the expected average number of new infections 
produced by a single HIV-infected individual in contact 
with a completely susceptible population, is given by 
$$
R_{0}=\dfrac{\beta (C_{3} C_{2} +\rho \eta_{A} C_{3} 
+ \phi  \eta_{C} C_{2})}{ \rho C_{3}  (\mu + d) +\mu C_{2} (C_{3} + \phi)}
=:\dfrac{\mathcal{N}}{\mathcal{D}},
$$
where along all the manuscript we use $C_{1}=\rho + \phi + \mu$, 
$C_{2}= \alpha + \mu+ d$, and $C_{3}= \omega + \mu$. The endemic 
point has the following expression: 
$$
(S^{\ast},I^{\ast},C^{\ast},A^{\ast})
=\left(\frac{\Lambda}{\lambda^{\ast} +\mu},  
-\frac{\lambda^{\ast} \Lambda C_{2}C_{3}}{D}, 
-\frac{\phi \lambda^{\ast} \Lambda C_{2}}{D}, 
-\frac{\rho \lambda^{\ast} \Lambda C_{3}}{D} \right),
$$ 
where $D=-(\lambda^{\ast} +\mu) (\mu (C_{3}(\rho 
+ C_{2}) +C_{2} \phi + \rho d) + \rho \omega d)$ and 
\begin{equation}
\label{lambda}
\lambda^{\ast}= \dfrac{\beta( I^{\ast}+\eta_{C} C^{\ast} 
+ \eta_{A} A^{\ast})}{N^{\ast}}
=\dfrac{\mathcal{D}(\mathcal{R}_{0}-1)}{C_{2}C_{3}+\phi C_{2}+\rho C_{3}},
\end{equation}
which is positive if $\mathcal{R}_{0}>1$. The explicit expression 
of the endemic equilibrium point of \eqref{eq:model} is given by
\begin{equation*}
S^{\ast}=\dfrac{\Lambda (\mathcal{D}
-\rho d C_{3})}{\mu (\mathcal{N} - \rho d C_{3})},
\quad
I^{\ast}=\dfrac{\Lambda C_{2} C_{3} (\mathcal{D}
-\mathcal{N})}{\mathcal{D}(\rho d C_{3} -\mathcal{N})},
\quad 
C^{\ast}=\dfrac{\Lambda C_{2} \phi (\mathcal{D}
-\mathcal{N})}{\mathcal{D}(\rho d C_{3} -\mathcal{N})},
\quad
A^{\ast}=\dfrac{\Lambda \rho C_{3} (\mathcal{D}
-\mathcal{N})}{\mathcal{D}(\rho d C_{3} -\mathcal{N})}.
\end{equation*}
Regarding the stability of the equilibrium points, 
Theorem~3.1 and Proposition~3.4 of \cite{MyID:318} 
establish the persistence of the endemic point. 
The disease is  persistent in the population if the 
infected cases with AIDS are bounded away from zero or the population $S$ disappears. 
The local stability of the endemic point is given in Theorem~3.8 of \cite{MyID:318}. 
Lemma~3.5 of \cite{MyID:318} states that the DFE is locally asymptotically stable 
if $R_{0} <1$ and unstable if $R_{0}>1$. Finally, Theorem~3.6 of \cite{MyID:318}
asserts that, under suitable conditions, the DFE point is globally asymptotically stable.
Here we prove similar properties in the discrete-time setting (section~\ref{sec3}).
For that we now recall an important tool for difference equations.

% ----------------------------

\subsection{The Schur--Cohn criterion}

One of the main tools that provides necessary and sufficient conditions 
for the zeros of a $n$th-degree polynomial
\begin{equation}
\label{eq:polincar}
p(\lambda)= \lambda^{k}+p_{1}\lambda^{k-1}+\cdots+p_{k}
\end{equation}
to lie inside the unit disk is the Schur--Cohn criterion \cite{Elaydi:05}. 
This result is useful for studying the stability of the zero solution of a $k$th-order 
difference equation or to investigate the stability of a $k$-dimensional system of the form
$$
x(n+1)=A x(n),
$$
where $p(\lambda)$ in \eqref{eq:polincar} is the characteristic polynomial of the matrix $A$. 
Let us introduce some preliminaries before presenting the Schur--Cohn criterion. 
Namely, let us define the inners of a matrix $B=(b_{ij})$. The inners of a matrix are the matrix 
itself and all the matrices obtained by omitting successively the first and last columns 
and first and last rows. A matrix $B$ is said to be \emph{positive innerwise} 
if the determinant of all its inners are positive. 

\begin{theorem}[The Shur--Cohn criterion \cite{Elaydi:05}]
\label{jury}
The zeros of the characteristic polynomial \eqref{eq:polincar} 
lie inside the unit disk if, and only if, the following holds:
\begin{enumerate}
\item[i)] $p(1)>0$;
\item[ii)] $(-1)^{k}p(-1)>0$;
\item[iii)] the $(k-1) \times (k-1)$ matrices
$$
B_{k-1}^{\pm}=
\left(
\begin{array}{ccccc}
1  & 0 & \cdots& & 0  \\
p_{1}   & 1 & \cdots & &0   \\
\vdots &  & & & \vdots \\
p_{k-3}&&& & p_{1}\\
p_{k-2}& p_{k-3}&\cdots&p_{1}&1  \\
\end{array}
\right)
\pm
\left(
\begin{array}{ccccc}
0&0   &\cdots &0&p_{k}   \\
0 & 0  &\cdots&p_{k}&p_{k-1}   \\
\vdots &\vdots  & &   &\vdots\\
0& p_{k}& & &p_{3}\\
p_{k}&p_{k-1}&\cdots&p_{3}&p_{2}\\
\end{array}
\right)
$$
are positive innerwise.
\end{enumerate}
\end{theorem}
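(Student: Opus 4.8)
The plan is to deduce the criterion from the classical principle that the distribution of the zeros of a real polynomial relative to the unit circle is encoded in the inertia (the signature) of an associated Hermitian form. Concretely, I would attach to $p$ its reversed polynomial $p^{\star}(\lambda)=\lambda^{k}p(1/\lambda)=1+p_{1}\lambda+\cdots+p_{k}\lambda^{k}$, whose zeros are the reciprocals of those of $p$, and form the Bezoutian matrix $\mathcal{B}(p,p^{\star})$, a real symmetric $k\times k$ matrix whose entries are quadratic expressions in the coefficients $p_{1},\dots,p_{k}$. The engine of the argument is the Schur--Cohn--Hermite inertia theorem, whose relevant consequence here is that $\mathcal{B}$ is positive definite \emph{exactly} when all $k$ zeros of $p$ lie in the open unit disk. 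I would establish this by relating the quadratic form of $\mathcal{B}$ to a contour integral of $p'/p$ over the unit circle, so that the argument principle converts the winding number of $p$ into the signature of $\mathcal{B}$; this identity is the conceptual heart of the proof.

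Granting the inertia statement, conditions (i) and (ii) are the easy \emph{boundary} constraints. Since $p$ is monic with zeros $\lambda_{1},\dots,\lambda_{k}$, one has $p(1)=\prod_{j}(1-\lambda_{j})$ and $(-1)^{k}p(-1)=\prod_{j}(1+\lambda_{j})$. For real coefficients the zeros occur in conjugate pairs, and if every $|\lambda_{j}|<1$ then each factor $1\mp\lambda_{j}$ has positive real part, so both products are positive; conversely these evaluations must be nonzero to exclude a zero at $\lambda=\pm1$, the two real points of the unit circle, where $\mathcal{B}$ degenerates. Thus (i) and (ii) are necessary, and they simultaneously supply the normalisation that fixes the overall sign in the determinantal test below.

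The remaining task, and the main obstacle, is to show that positive definiteness of $\mathcal{B}$ is equivalent to the \emph{positive innerwise} conditions on the two matrices $B_{k-1}^{\pm}$. Here I would exploit the splitting induced by the symmetry $\lambda\leftrightarrow 1/\lambda$: passing to $p\pm p^{\star}$ separates the Bezoutian into a symmetric (Hankel) part and a Toeplitz part, which are precisely the two summands appearing in $B_{k-1}^{+}$ and $B_{k-1}^{-}$. Sylvester's criterion turns positive definiteness of $\mathcal{B}$ into positivity of a nested family of minors, and the combinatorial crux is to identify this family with the inner determinants of $B_{k-1}^{\pm}$ after the boundary factors $p(1)$ and $p(-1)$ have been divided out. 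This bookkeeping---matching leading principal minors of a Bezoutian to the inners of the Toeplitz $\pm$ Hankel matrices, and verifying that no sign is lost---is where the real work lies. As a cross-check, and an alternative route that avoids the Bezoutian altogether, I would confirm the statement through the Cayley transform $\lambda=(1+z)/(1-z)$, which maps the open unit disk onto the open left half-plane and sends $p$ to the degree-$k$ polynomial $q(z)=(1-z)^{k}p\!\left((1+z)/(1-z)\right)$; applying the Routh--Hurwitz criterion to $q$ reproduces (i)--(iii), with $q(0)\propto p(1)$ and the leading coefficient of $q$ proportional to $(-1)^{k}p(-1)$, so that the two formulations agree.
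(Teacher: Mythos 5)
The paper never proves Theorem~\ref{jury}: it is quoted verbatim from Elaydi \cite{Elaydi:05} as a known tool, so there is no internal argument to compare yours against, and your proposal must be judged on its own. Its easy parts are sound. For a monic real $p$ one indeed has $p(1)=\prod_j(1-\lambda_j)$ and $(-1)^{k}p(-1)=\prod_j(1+\lambda_j)$, conjugate pairs contribute factors $|1\mp\lambda_j|^{2}>0$ and real roots in $(-1,1)$ give positive factors, so (i)--(ii) are necessary; and your Cayley-transform bookkeeping is right, since $q(z)=(1-z)^{k}p\bigl((1+z)/(1-z)\bigr)$ satisfies $q(0)=p(1)$ and has leading coefficient $(-1)^{k}p(-1)$, with $|\lambda|<1$ corresponding to $\mathrm{Re}\,z<0$. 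The choice of engine --- the Hermite--Schur--Cohn inertia theorem for the Bezoutian $\mathcal{B}(p,p^{\star})$, or equivalently Routh--Hurwitz after the Cayley transform --- is the standard route to this criterion.

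The genuine gap is that the theorem you are asked to prove \emph{is} the determinantal condition (iii), and that is exactly the step you defer as ``bookkeeping\ldots where the real work lies.'' Two concrete obstructions show this is not routine. First, Sylvester's criterion converts positive definiteness of $\mathcal{B}$ into positivity of \emph{leading principal} minors, whereas being positive innerwise requires positivity of the \emph{central} nested minors of $B_{k-1}^{\pm}$ (sizes $k-1,k-3,\dots$, obtained by stripping first \emph{and} last rows and columns); these are different minor families, and identifying one with the other, after dividing out the boundary factors $p(\pm1)$, is precisely the content of the Jury--Marden reduction of Schur--Cohn --- asserting that ``no sign is lost'' is the theorem, not a verification of it. Second, the dimension count must be reconciled: $\mathcal{B}$ is $k\times k$ with $k$ leading minors, while (iii) supplies inner determinants from two $(k-1)\times(k-1)$ Toeplitz-plus-Hankel matrices together with the two scalar conditions (i)--(ii), so you must prove the exact correspondence (including why both $B^{+}$ and $B^{-}$ are needed, which your $p\pm p^{\star}$ splitting only gestures at). The same issue recurs in your cross-check: matching Hurwitz minors of $q$ with the inners of $B_{k-1}^{\pm}$ is again an unexecuted combinatorial identity. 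Your inertia engine itself is also only sketched (the contour-integral link between the winding number of $p$ and the signature of $\mathcal{B}$ needs the degenerate case of zeros on the circle handled), though that part could legitimately be cited. As it stands you have a correct skeleton that reduces the criterion to known deep results plus the one identification that is never carried out, so the proposal does not yet constitute a proof.
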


Using the Schur--Cohn criterion, one may obtain necessary and sufficient conditions 
on the $p_{i}$ coefficients such that the zero solution of \eqref{eq:polincar} 
is locally asymptotically stable. 

\begin{theorem}[See \cite{Elaydi:05}]
Let $A \in \mathcal{M}_{k \times k}$ and let \eqref{eq:polincar} 
be its characteristic polynomial. Then, $p(\lambda)$ is a polynomial of degree $k$. 
Moreover, $p(\lambda)$ has the form
\begin{equation}
p(\lambda)= (-1)^{k}\lambda^{k}+ (-1)^{k-1} Tr(A) \lambda^{k-1}+ \cdots +\det(A).
\end{equation}
\end{theorem}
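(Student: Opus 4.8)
The plan is to obtain $p(\lambda)$ directly from the Leibniz expansion of the determinant and to track only the two highest powers of $\lambda$ together with the constant term. Adopting the sign convention forced by the statement, I would take $p(\lambda)=\det(A-\lambda I)$ and write out the permutation formula
\begin{equation*}
p(\lambda)=\sum_{\sigma \in S_{k}} \mathrm{sgn}(\sigma) \prod_{i=1}^{k}\bigl(a_{i\sigma(i)}-\lambda\,\delta_{i\sigma(i)}\bigr),
\end{equation*}
where $S_{k}$ is the symmetric group on $k$ symbols and $\delta$ is the Kronecker delta. Each factor equals $a_{ii}-\lambda$ when $\sigma(i)=i$ and the constant $a_{i\sigma(i)}$ otherwise, so every summand is a polynomial in $\lambda$ whose degree is exactly the number of fixed points of $\sigma$.

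The key step, and the only combinatorial point to handle carefully, is the observation that a permutation with at least $k-1$ fixed points must be the identity, since fixing $k-1$ indices forces the last one to be fixed as well. Hence the only summand of degree $k$ or $k-1$ is the one coming from $\sigma=\mathrm{id}$, namely $\prod_{i=1}^{k}(a_{ii}-\lambda)$, while every other permutation fixes at most $k-2$ indices and thus contributes a polynomial of degree at most $k-2$. Expanding this diagonal product, the top term is $(-\lambda)^{k}=(-1)^{k}\lambda^{k}$, which shows that $p$ has degree exactly $k$ and fixes the leading coefficient; the coefficient of $\lambda^{k-1}$ is obtained by choosing $-\lambda$ from $k-1$ factors and the diagonal entry from the remaining one, giving $(-1)^{k-1}\sum_{i}a_{ii}=(-1)^{k-1}Tr(A)$, and no other permutation reaches this degree. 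Finally the constant term is $p(0)=\det(A-0\cdot I)=\det(A)$, which supplies the last displayed coefficient; assembling the three pieces yields the claimed form.

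I expect no genuine difficulty in this argument: once the fixed-point count is in place, everything reduces to reading off coefficients from a single product. The main obstacle, if any, is precisely that combinatorial observation ruling out any contribution of the non-identity permutations to the two highest-degree terms, which is what guarantees that $Tr(A)$ and the leading sign are determined solely by the diagonal product.
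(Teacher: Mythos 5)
Your proof is correct, but there is nothing in the paper to compare it against: the paper states this theorem without proof, quoting it directly from the reference [Elaydi, \emph{An Introduction to Difference Equations}], so the comparison reduces to checking your argument on its own merits. Your Leibniz-expansion argument is the standard textbook proof and is complete: the observation that a permutation with at least $k-1$ fixed points is the identity correctly isolates the diagonal product $\prod_{i=1}^{k}(a_{ii}-\lambda)$ as the sole contributor to the $\lambda^{k}$ and $\lambda^{k-1}$ coefficients, and $p(0)=\det(A)$ handles the constant term. Two small remarks. First, your claim that each summand has degree \emph{exactly} equal to the number of fixed points of $\sigma$ is slightly too strong --- if some off-diagonal entry $a_{i\sigma(i)}$ vanishes, the summand is identically zero --- but the proof only uses the correct bound ``at most the number of fixed points,'' so nothing breaks. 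Second, you rightly noticed the sign-convention issue: the paper's own \eqref{eq:polincar} is monic, which corresponds to $\det(\lambda I - A)$, whereas the displayed form in the theorem, with leading coefficient $(-1)^{k}$, forces the convention $p(\lambda)=\det(A-\lambda I)$; this mismatch is present in the paper itself, and your explicit choice of convention resolves it cleanly.
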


% ------------------------------------------------------

\section{Main results}
\label{sec3}

We begin by proposing a discrete-time SICA model.

% ----------------------------

\subsection{The NSFD scheme}
\label{sec31}

One of the important features of the discrete-time epidemic models 
obtained by Mickens method is that they present the same features 
as the corresponding original continuous-time models. Here, we construct 
a dynamically consistent numerical NSFD scheme for solving 
\eqref{eq:model} based on \cite{Mickens:94,Mickens:02,Mickens:05}.  
Let us  define the time instants $t_{n}= nh$ with $n$ integer, 
$h=t_{n+1} -t_{n}$ as the time step size, and $(S_{n}, I_{n}, C_{n}, A_{n})$ 
as the approximated values of $(S(nh), I(nh), C(nh), A(nh))$. Thus, 
the NSFD scheme for model \eqref{eq:model} takes the following form:
\begin{equation}
\label{eq:discmodel}
\begin{cases}
\dfrac{S_{n+1}-S_{n}}{\psi(h)} = \Lambda - \tilde{\lambda}_{n} S_{n+1} - \mu S_{n+1},\\[0.2 cm]
\dfrac{I_{n+1}-I_{n}}{\psi(h)} = \tilde{\lambda}_{n} S_{n+1} - (\rho + \phi + \mu)I_{n+1}
+ \alpha A_{n+1}  + \omega C_{n+1}, \\[0.2 cm]
\dfrac{C_{n+1}-C_{n}}{\psi(h)} = \phi I_{n+1} - (\omega + \mu)C_{n+1},\\[0.2 cm]
\dfrac{A_{n+1}-A_{n}}{\psi(h)} =  \rho \, I_{n+1} - (\alpha + \mu + d) A_{n+1},
\end{cases}
\end{equation}
where $ \tilde{\lambda}_{n}=\frac{\beta}{N_{n}}(I_{n}+\eta_{C}C_{n}+\eta_{A}A_{n})$.
The nonstandard schemes are based in two fundamental principles \cite{Mickens:02,Mickens:05}:
\begin{enumerate}
\item Regarding the first derivative, we have
$$
\dfrac{dx}{dt} \rightarrow \dfrac{x_{k+1}-\nu(h) x_{k}}{\psi(h)},
$$ 
where $\nu(h)$ and $\psi(h)$ are the numerator and denominator 
functions that satisfy the requirements
$$
\nu(h)=1+O(h^{2}), \qquad \psi(h)=h+ O(h^{2}).
$$
In general, the numerator function can be selected to be $\nu(h)=1$. 
We will make this choice here. Generally, the denominator function 
is nontrivial. Based on Mickens work \cite{Mickens:07,Mickens:13}, 
when we write explicitly $S_{n+1}$ using $\phi(h)=h$, 
we have in the denominator the term $1+\mu h$, 
which means that we can use $\psi(h)=\dfrac{\e^{\mu h}-1}{\mu}$  
as the denominator function. Throughout our study, 
$\psi(h)=\dfrac{\e^{\mu h}-1}{\mu}$ but,
for brevity, we write $\psi(h)=\psi$.

\item Both linear and nonlinear functions of $x(t)$ and its derivatives 
may require a ``nonlocal'' discretization. For example, 
$x^{2}$ can be replaced by $x_{k} x_{k+1}$.
\end{enumerate}

Since model \eqref{eq:discmodel} is linear in $S_{n+1}$, $I_{n+1}$,
$C_{n+1}$, and $A_{n+1}$, we can obtain their explicit form:
\begin{equation}
\label{eq:discmodelexplicit}
\begin{cases}
S_{n+1}=\dfrac{S_{n}+\Lambda \psi}{1+\mu \psi +\frac{\beta \psi}{N_{n}} 
\left(I_{n}+ \eta_{C} C_{n}+ \eta_{A} A_{n}\right)},\\[0.3cm]
I_{n+1}=\dfrac{\left(I_{n}+\frac{\beta \psi S_{n+1}(I_{n}
+\eta_{C}C_{n}+\eta_{A}A_{n})}{N_{n}}\right)(1+C_{2}\psi)(1+C_{3} \psi)
+\alpha \psi A_{n}(1+C_{3}\psi)+\omega \phi C_{n}(1+C_{2} \psi)}{(1
+C_{1}\psi)(1+C_{2}\psi)(1+C_{3}\psi)
-\alpha\rho \psi^{2}(1+C_{3} \psi)
-\omega \phi \psi^{2}(1+C_{2}\psi)},\\[0.3cm]
C_{n+1}=\dfrac{\phi \psi I_{n+1}+C_{n}}{1+C_{3}\psi},\\[0.3cm]
A_{n+1}=\dfrac{\rho \psi I_{n+1} + A_{n}}{1+C_{2} \psi}.
\end{cases}
\end{equation}

% ----------------------------

\subsection{Positivity of solutions}
\label{subsec:post:sol}

The first result to be shown is the positivity of the solutions.

\begin{theorem}
\label{positiv}
If all the initial and parameter values of the discrete system 
\eqref{eq:discmodelexplicit} are positive, then the
solutions are always positive for all $n \geq 0$ 
with denominator function $\psi$.
\end{theorem}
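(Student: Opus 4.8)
The plan is to argue by induction on $n$. For the chosen denominator function one has $\psi>0$, and by hypothesis all parameters and all initial values $S_0,I_0,C_0,A_0$ are positive, so the base case $n=0$ is immediate. For the inductive step I would assume $S_n>0$ and $I_n,C_n,A_n\geq 0$ and prove the same inequalities for index $n+1$, treating the four components in the order $S_{n+1}$, then $I_{n+1}$, and finally $C_{n+1}$ and $A_{n+1}$, since the last two are expressed through $I_{n+1}$.

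First I would note that $N_n=S_n+I_n+C_n+A_n\geq S_n>0$, so that $\tilde{\lambda}_n=\frac{\beta}{N_n}(I_n+\eta_C C_n+\eta_A A_n)$ is well defined and nonnegative. In the first equation of \eqref{eq:discmodelexplicit} the numerator $S_n+\Lambda\psi$ is strictly positive while the denominator $1+\mu\psi+\frac{\beta\psi}{N_n}(I_n+\eta_C C_n+\eta_A A_n)$ is a sum of a positive and several nonnegative terms, so $S_{n+1}>0$. Granting for the moment that $I_{n+1}\geq 0$, the last two equations then give $C_{n+1}$ and $A_{n+1}$ at once: their numerators $\phi\psi I_{n+1}+C_n$ and $\rho\psi I_{n+1}+A_n$ are nonnegative and their denominators $1+C_3\psi$, $1+C_2\psi$ are positive.

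The main obstacle is therefore the sign of $I_{n+1}$. Its numerator in \eqref{eq:discmodelexplicit} is a sum of products of the nonnegative quantities $I_n,C_n,A_n,S_{n+1},\psi$ with the positive factors $1+C_2\psi$ and $1+C_3\psi$, hence nonnegative; so everything reduces to showing that the denominator
\[
D_I=(1+C_1\psi)(1+C_2\psi)(1+C_3\psi)-\alpha\rho\psi^2(1+C_3\psi)-\omega\phi\psi^2(1+C_2\psi)
\]
is positive. I would expand $D_I$ as a cubic in $\psi$ and verify that every coefficient is positive. The constant and linear terms, $1$ and $C_1+C_2+C_3$, are clearly positive. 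For the coefficient of $\psi^2$, namely $C_1C_2+C_1C_3+C_2C_3-\alpha\rho-\omega\phi$, the key observation is that $C_1\geq\rho$ and $C_2\geq\alpha$ force $C_1C_2\geq\alpha\rho$, while $C_1\geq\phi$ and $C_3\geq\omega$ force $C_1C_3\geq\omega\phi$, leaving at least $C_2C_3>0$. For the coefficient of $\psi^3$, writing $C_1=\rho+\phi+\mu$ and using $C_2-\alpha=\mu+d$, $C_3-\omega=\mu$ collapses $C_1C_2C_3-\alpha\rho C_3-\omega\phi C_2$ to $\rho C_3(\mu+d)+\mu\phi C_2+\mu C_2C_3=\mathcal{D}$, the positive denominator of $R_0$. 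Thus $D_I>0$ for every $\psi>0$, which yields $I_{n+1}\geq 0$ and closes the induction.
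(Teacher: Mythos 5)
Your proposal is correct and takes essentially the same approach as the paper: both arguments reduce everything to the positivity of the denominator of $I_{n+1}$, shown by expanding it as a cubic in $\psi$ and checking that every coefficient is positive, with your identification of the $\psi^{3}$ coefficient as $\rho C_{3}(\mu+d)+\mu C_{2}(C_{3}+\phi)=\mathcal{D}$ matching the paper's simplification exactly. The only difference is presentational: you spell out the induction and the easy cases ($S_{n+1}$, $C_{n+1}$, $A_{n+1}$, and the well-definedness of $\tilde{\lambda}_{n}$), which the paper compresses into the remark that only the positivity of $I_{n+1}$ needs proof.
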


\begin{proof}
Let us assume that $S(0), I(0), C(0), A(0)$ are positive.  
We only need to show that $I_{n+1}$ is positive.
The denominator is given by
\begin{equation}
(1+C_{1}\psi)(1+C_{2}\psi)(1+C_{3}\psi)
-\alpha\rho \psi^{2}(1+C_{3} \psi)-\omega \phi \psi^{2}(1+C_{2}\psi),
\end{equation}
which can be rewritten as
\begin{equation*}
1+(C_{1}+C_{2} +C_{3})\psi  + (C_{1} C_{2} 
+ C_{1}C_{3}+ C_{2} C_{3}-\phi \omega -\alpha \rho)\psi^{2} 
+ (C_{1} C_{2} C_{3} -C_{2} \phi \omega -\alpha \rho C_{3})\psi^{3}
\end{equation*}
and, simplifying, we get
\begin{multline*}
1+(C_{1}+C_{2} +C_{3})\psi +\left(
C_{2}(2\mu+\phi+\omega)+C_{3}(\omega+\rho)+\mu(\phi+\rho)+\rho d\right)\psi^{2}\\ 
+(C_{3} \rho (\mu+d) + C_{2}\mu (C_{3} + \phi))\psi^{3}.
\end{multline*}
Since all parameter values are positive, $(S_{n+1},I_{n+1}, C_{n+1},A_{n+1})$ 
is positive for all $n \geq 0$.
\end{proof}
 
% ---------------------------- 
 
\subsection{Conservation law}
\label{subsec:cons:law}

The second result to be shown is the conservation law or boundedness of the solutions.

\begin{theorem}
\label{conservation}
The NSFD scheme defines the discrete dynamical system 
\eqref{eq:discmodelexplicit} on 
\begin{equation}
\tilde{\Omega}=\left\{ (S_{n},I_{n},C_{n},A_{n}) 
: 0 \leq S_{n}+I_{n}+C_{n}+A_{n} \leq \frac{\Lambda}{\mu}\right\}.
\end{equation}
\end{theorem}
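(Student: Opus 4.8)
The plan is to mirror the continuous argument reproduced in Section~\ref{sec2}, where summing the four differential equations gives $\dot N = \Lambda - \mu N - dA \le \Lambda - \mu N$ and hence the invariance of $\Omega$. The discrete counterpart is obtained by adding the four equations of the \emph{implicit} scheme \eqref{eq:discmodel}. First I would add them and note that the infection term $\tilde{\lambda}_{n} S_{n+1}$ cancels between the $S$ and $I$ equations, while the internal transfer terms collect cleanly: the $I_{n+1}$ coefficients give $-(\rho+\phi+\mu)+\phi+\rho=-\mu$, the $C_{n+1}$ coefficients give $\omega-(\omega+\mu)=-\mu$, and the $A_{n+1}$ coefficients give $\alpha-(\alpha+\mu+d)=-(\mu+d)$. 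Writing $N_{n}=S_{n}+I_{n}+C_{n}+A_{n}$, the sum collapses to the exact discrete balance law
$$
\frac{N_{n+1}-N_{n}}{\psi}=\Lambda-\mu N_{n+1}-d\,A_{n+1}.
$$

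Since Theorem~\ref{positiv} guarantees $A_{n+1}\ge 0$, I can drop the death term to obtain $N_{n+1}-N_{n}\le \psi(\Lambda-\mu N_{n+1})$, and solving for $N_{n+1}$ yields
$$
N_{n+1}\le \frac{N_{n}+\psi\Lambda}{1+\mu\psi}.
$$
Boundedness then follows by induction on $n$, starting from the hypothesis that the initial datum lies in $\tilde{\Omega}$, i.e. $N_{0}\le \Lambda/\mu$. Assuming $N_{n}\le \Lambda/\mu$, the right-hand side is at most
$$
\frac{(\Lambda/\mu)+\psi\Lambda}{1+\mu\psi}=\frac{\Lambda}{\mu}\cdot\frac{1+\mu\psi}{1+\mu\psi}=\frac{\Lambda}{\mu},
$$
so the upper threshold is preserved exactly. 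The key algebraic observation is that $\Lambda/\mu$ is the fixed point of the map $x\mapsto (x+\psi\Lambda)/(1+\mu\psi)$ for \emph{every} choice of denominator function $\psi>0$; this is precisely the dynamical consistency one wants, since it reproduces the continuous invariant region $\Omega$ independently of the step size $h$. The lower bound $N_{n}\ge 0$ is immediate from the positivity already established in Theorem~\ref{positiv}.

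I do not expect a genuine obstacle here; the only point demanding care is that the scheme is implicit, so summing the equations naturally produces $N_{n+1}$ (rather than $N_{n}$) on the right-hand side of the balance law. This turns out to be an advantage rather than a difficulty: after rearranging, the factor $(1+\mu\psi)$ multiplying $N_{n+1}$ is exactly what cancels the $(1+\mu\psi)$ generated by $\Lambda/\mu$, so the invariant threshold is recovered unconditionally in $h$. By contrast, a standard explicit Euler discretization would force a step-size restriction to keep $N_{n}$ below $\Lambda/\mu$, which is the qualitative defect the NSFD construction is designed to avoid.
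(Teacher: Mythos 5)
Your proposal is correct and, in its essentials, reproduces the paper's own proof: you sum the four equations of \eqref{eq:discmodel}, observe the same cancellations to obtain the exact balance law $(N_{n+1}-N_{n})/\psi=\Lambda-\mu N_{n+1}-d\,A_{n+1}$, discard the death term using $A_{n+1}\ge 0$ from Theorem~\ref{positiv}, and arrive at the same recursive bound $N_{n+1}\le (N_{n}+\psi\Lambda)/(1+\mu\psi)$. The only divergence is in how this recursion is closed: the paper unrolls it, sums the resulting geometric series (invoking the discrete Gronwall inequality) to obtain the explicit estimate $N_{n}\le \frac{\Lambda}{\mu}+\left(N_{0}-\frac{\Lambda}{\mu}\right)\left(\frac{1}{1+\mu\psi}\right)^{n}$, and thereby also records the convergence $N_{n}\to\Lambda/\mu$ as $n\to\infty$; you instead close by induction, using that $\Lambda/\mu$ is the fixed point of the map $x\mapsto(x+\psi\Lambda)/(1+\mu\psi)$ for every $\psi>0$. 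Your induction is the more elementary route and proves exactly what the theorem asserts --- invariance of $\tilde{\Omega}$, unconditionally in the step size --- whereas the paper's explicit solution of the recursion buys the additional asymptotic information $N_{n}\to\Lambda/\mu$ (a geometric approach rate), which the statement itself does not require; your remark contrasting this with the step-size restriction of explicit Euler is apt but likewise not needed for the proof.
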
 

\begin{proof} 
Let the total population be $N_{n}=S_{n}+I_{n}+C_{n}+A_{n}$. 
Adding the four equations of \eqref{eq:discmodel}, we have
\begin{equation*}
\dfrac{N_{n+1}-N_{n}}{\psi}
=\Lambda - \mu N_{n+1}-d A_{n+1} 
\Leftrightarrow (1+\mu \psi) N_{n+1}=\Lambda \psi+N_{n}-d \psi A_{n+1}\\
\end{equation*}
and
\begin{equation*}
N_{n+1} \leq \frac{\Lambda \psi}{1+\mu \psi}+ \dfrac{N_{n}}{1+\mu \psi}
\Leftrightarrow N_{n} \leq \Lambda \psi \sum_{j=1}^{n} 
\left(\frac{1}{1+\mu \psi}\right)^{j}+N_{0}\left(\frac{1}{1+\mu \psi}\right)^{n}.
\end{equation*}
By the discrete Grownwall inequality, if $0<N(0)< \frac{\Lambda}{\mu}$, then
\begin{equation}
N_{n} \leq \frac{\Lambda }{\mu}\left(  1- \frac{1}{(1+\mu \psi)^{n}}\right)
+N_{0}\left(  \frac{1}{1+\mu \psi}\right)^{n} =\dfrac{\Lambda}{\mu} 
+ \left( N_{0}- \frac{\Lambda}{\mu}\right)\left(\dfrac{1}{1+ \mu \psi}\right)^{n}
\end{equation}
and, since $\left( \dfrac{1}{1+ \mu \psi}\right) <1$, we have
$N_{n} \to \frac{\Lambda}{\mu}$ as $n \to \infty$. We conclude 
that the feasible region $\tilde{\Omega}$ 
is maintained within the discrete scheme.
\end{proof}

% ----------------------------

\subsection{Elementary stability}
\label{subsec:el:stab}

A difference scheme that approximates a first-order differential system 
is \emph{elementary stable} if, for any value of the step size, its 
fixed-points are exactly those of the differential system. Furthermore, 
when applied to the associated linearized differential system, 
the resulting difference scheme has the same stability/instability 
properties \cite{Dimitrov:06}.

The continuous and discrete system have the same equilibria. 
The disease free equilibrium (DFE) point is given by 
\begin{equation}
\label{eq:dfe}
E_{0}=(S^{\ast},I^{\ast},C^{\ast},A^{\ast})
=\left( \frac{\Lambda}{ \mu}, 0, 0, 0\right).
\end{equation}
The explicit expression of the endemic equilibrium point 
of \eqref{eq:discmodelexplicit} can be rewritten 
as the one of the continuous case: when 
\begin{equation*}
\lambda^{\ast}= \dfrac{\beta( I^{\ast}+\eta_{C} C^{\ast} + \eta_{A}
A^{\ast})}{N^{\ast}}=\dfrac{\mathcal{D}(\mathcal{R}_{0}-1)}{C_{2}C_{3}
+\phi C_{2}+\rho C_{3}},
\end{equation*}
we obtain the endemic equilibrium point. The explicit expression 
of the endemic equilibrium point of \eqref{eq:discmodelexplicit} 
can be rewritten as the one of the continuous case:
\begin{equation}
\label{eq:endemic}
S^{\ast}=\dfrac{\Lambda (\mathcal{D}
- \rho d C_{3})}{\mu (\mathcal{N} - \rho d C_{3})},
\quad
I^{\ast}=\dfrac{\Lambda C_{2} C_{3} 
(\mathcal{D}-\mathcal{N})}{\mathcal{D}(\rho d C_{3}-\mathcal{N})},
\quad
C^{\ast}=\dfrac{\Lambda C_{2} 
\phi (\mathcal{D}-\mathcal{N})}{\mathcal{D}(\rho d C_{3} 
-\mathcal{N})}, 
\quad A^{\ast}=\dfrac{\Lambda \rho C_{3} (\mathcal{D}
-\mathcal{N})}{\mathcal{D}(\rho d C_{3} -\mathcal{N})}.
\end{equation}
After some computations, we get the following equalities:
\begin{equation*}
\begin{split}
S^{\ast}&=\dfrac{\Lambda}{\lambda^{\ast}+\mu}
=\dfrac{\Lambda (C_{2}C_{3}+C_{2} \phi + \rho C_{3})}{ \mathcal{D}(\mathcal{R}_{0}-1)
+\mu(C_{2}C_{3}+C_{2} \phi + \rho C_{3})}=\dfrac{\Lambda (\mathcal{D}
-\rho d C_{3})}{\mu (\mathcal{N} - \rho d C_{3})},\\
I^{\ast}&=\dfrac{S^{\ast} \lambda^{\ast}C_{2}C_{3}}{\mathcal{D}}
= \dfrac{\Lambda (R_{0}-1)C_{2}C_{3}}{\mathcal{D}(\mathcal{R}_{0}-1)
+\mu(C_{2}C_{3}+C_{2} \phi + \rho C_{3})}
=\dfrac{\Lambda C_{2} C_{3} (\mathcal{D}-\mathcal{N})}{\mathcal{D}(\rho d C_{3}
-\mathcal{N})},\\
C^{\ast}&= \dfrac{S^{\ast} \lambda^{\ast} \phi C_{2}}{\mathcal{D}}
=\dfrac{\Lambda (R_{0}-1)C_{2}\phi}{\mathcal{D}(\mathcal{R}_{0}-1)
+\mu(C_{2}C_{3}+C_{2} \phi + \rho C_{3})}=\dfrac{\Lambda C_{2} \phi (\mathcal{D}
-\mathcal{N})}{\mathcal{D}(\rho d C_{3} -\mathcal{N})},\\
A^{\ast}&= \dfrac{S^{\ast} \lambda^{\ast} \rho C_{3}}{\mathcal{D}}
=\dfrac{\Lambda (R_{0}-1)\rho C_{3}}{\mathcal{D}(\mathcal{R}_{0}-1)
+\mu(C_{2}C_{3}+C_{2} \phi + \rho C_{3})}
=\dfrac{\Lambda \rho C_{3} (\mathcal{D}-\mathcal{N})}{\mathcal{D}(\rho d C_{3} 
-\mathcal{N})}.
\end{split}
\end{equation*}

% ----------------

\subsubsection{Local stability of the DFE point}

Let us discuss the stability of the proposed 
NSFD scheme at the DFE point $E_{0}$ \eqref{eq:dfe}. 

\begin{remark}
Several articles use the next-generation matrix
approach presented in \cite{Driessche}. 
For that, however, the matrices $F+T$ must be non-negative. 
Our model does not satisfy such condition.
\end{remark}

The following technical lemma has an important role in our proofs.

\begin{lemma}
\label{beta}
If $\mathcal{R}_{0}<1$, then $\beta$ must be smaller than $C_{1}$.
\end{lemma}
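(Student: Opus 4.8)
The plan is to translate the hypothesis $\mathcal{R}_0 < 1$ into a polynomial inequality and then compare it directly against the target $\beta < C_1$. Since every parameter of the model is positive, the denominator $\mathcal{D} = \rho C_3(\mu+d) + \mu C_2(C_3+\phi)$ is strictly positive, so $\mathcal{R}_0 = \mathcal{N}/\mathcal{D} < 1$ is equivalent to $\mathcal{N} < \mathcal{D}$. Recalling that $\mathcal{N} = \beta M$ with $M := C_2 C_3 + \rho \eta_A C_3 + \phi \eta_C C_2$, the conclusion $\beta < C_1$ will follow if I can exhibit the bound $\mathcal{D} \le C_1 M$, because then $\beta M = \mathcal{N} < \mathcal{D} \le C_1 M$ and division by $M > 0$ gives the claim.

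First I would reduce the required bound $\mathcal{D} \le C_1 M$ to the cleaner estimate $\mathcal{D} \le C_1 C_2 C_3$. Since $\eta_A, \eta_C \ge 0$ and all the compartmental rates are positive, the two extra terms in $M$ are nonnegative, so $M \ge C_2 C_3$ and hence $C_1 M \ge C_1 C_2 C_3$. Thus it suffices to prove $\mathcal{D} \le C_1 C_2 C_3$; notably this reduction needs only nonnegativity of the modification parameters and uses neither $\eta_A \ge 1$ nor $\eta_C \le 1$.

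The core of the argument is then the algebraic identity $C_1 C_2 C_3 - \mathcal{D} = \rho \alpha C_3 + \phi \omega C_2$. To obtain it I would expand $C_1 C_2 C_3 = (\rho+\phi+\mu)C_2 C_3$ and cancel term by term against $\mathcal{D}$, exploiting the two structural relations $C_2 - \mu - d = \alpha$ and $C_3 - \mu = \omega$ that come straight from the definitions $C_2 = \alpha + \mu + d$ and $C_3 = \omega + \mu$. These collapse the leftover $\rho C_3(C_2 - \mu - d) + \phi C_2(C_3 - \mu)$ into $\rho \alpha C_3 + \phi \omega C_2$, which is manifestly nonnegative, so $\mathcal{D} \le C_1 C_2 C_3$. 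Chaining the inequalities then yields $\beta M = \mathcal{N} < \mathcal{D} \le C_1 C_2 C_3 \le C_1 M$, and cancelling $M$ completes the proof. The only genuine obstacle is the bookkeeping in the term-by-term cancellation; once the substitutions $C_2 - \mu - d = \alpha$ and $C_3 - \mu = \omega$ are spotted the rest is routine, and no sign assumption on the modification parameters beyond nonnegativity is invoked.
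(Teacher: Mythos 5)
Your proof is correct and takes essentially the same route as the paper: the paper's proof starts from $\mathcal{R}_{0}<1$ with the denominator already written in the form of your key identity, $\mathcal{D}=C_{1}C_{2}C_{3}-\rho\alpha C_{3}-\phi\omega C_{2}$, and rearranges $\mathcal{N}<\mathcal{D}$ into $(\beta-C_{1})C_{2}C_{3}<-\beta C_{3}\eta_{A}\rho-\beta C_{2}\eta_{C}\phi-C_{3}\alpha\rho-C_{2}\omega\phi<0$, which is the same positivity argument as your chain $\beta M=\mathcal{N}<\mathcal{D}\leq C_{1}C_{2}C_{3}\leq C_{1}M$ followed by cancelling $M>0$. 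Your packaging (isolating the bound $\mathcal{D}\leq C_{1}C_{2}C_{3}$ and noting that only nonnegativity of $\eta_{A},\eta_{C}$ is used) is a slightly cleaner write-up of the identical algebra, not a different method.
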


\begin{proof}
If $\mathcal{R}_{0}<1$, then 
\begin{align*}
\dfrac{\beta(C_{2} C_{3}+C_{3} \eta_{A} \rho
+C_{2} \eta_{C}\phi)}{C_{1} C_{2} C_{3} 
-C_{3} \alpha \rho -C_{2} \omega \phi}<1
&\Leftrightarrow
\beta(C_{2} C_{3}+C_{3} \eta_{A} \rho+C_{2} \eta_{C}\phi)
<C_{1} C_{2} C_{3} -C_{3} \alpha \rho -C_{2} \omega \phi \\
&\Leftrightarrow
\beta(C_{2} C_{3}+C_{3} \eta_{A} \rho+C_{2} \eta_{C}\phi)
-C_{1} C_{2} C_{3} -C_{3} \alpha \rho -C_{2} \omega \phi<0\\
&\Leftrightarrow
(\beta-C_{1}) C_{2}C_{3}<-\beta C_{3} \eta_{A} \rho
-\beta C_{2} \eta_{C}\phi -C_{3} \alpha \rho -C_{2} \omega \phi<0. 
\end{align*}
Since $C_{2}$ and $C_{3}$ are positive, 
we conclude that $\beta-C_{1} <0$.
\end{proof}

\begin{proposition}
\label{C1}
The first condition of Theorem~\ref{jury}, 
$p_{4}(1)>0$, is satisfied if $\mathcal{R}_{0}<1$.
\end{proposition}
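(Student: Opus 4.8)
The plan is to read off $p_{4}$ as the characteristic polynomial of the Jacobian of the map \eqref{eq:discmodelexplicit} at the disease free equilibrium $E_{0}$, and to show that $p_{4}(1)$ equals, up to a manifestly positive factor, the quantity $\mathcal{D}-\mathcal{N}$, so that positivity becomes equivalent to $\mathcal{R}_{0}=\mathcal{N}/\mathcal{D}<1$. First I would linearize the \emph{implicit} scheme \eqref{eq:discmodel} about $E_{0}=(\Lambda/\mu,0,0,0)$, which is far cleaner than differentiating the explicit forms \eqref{eq:discmodelexplicit}. The key simplification is that $\tilde{\lambda}=0$ at $E_{0}$: hence the only first-order contribution of the incidence term $\tilde{\lambda}_{n}S_{n+1}$ comes from varying $\tilde{\lambda}_{n}$ in the infected variables, and since $S^{\ast}=N^{\ast}=\Lambda/\mu$ the factor $1/N_{n}$ cancels against $S^{\ast}$, leaving exactly $\beta(\delta I_{n}+\eta_{C}\delta C_{n}+\eta_{A}\delta A_{n})$, with no surviving $\delta S_{n}$ term.

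Collecting terms, the linearized system takes the form $M\,u_{n+1}=P\,u_{n}$, where $M$ is block diagonal, consisting of the scalar $S$-block $1+\mu\psi$ and a $3\times 3$ block in the $(I,C,A)$ variables with diagonal entries $1+C_{1}\psi,1+C_{3}\psi,1+C_{2}\psi$ and off-diagonal couplings $-\omega\psi,-\alpha\psi,-\phi\psi,-\rho\psi$ arising from the implicit treatment of the linear cross-terms; $P$ carries the identity together with the incidence row $(1,-\beta\psi,-\beta\eta_{C}\psi,-\beta\eta_{A}\psi)$ in the $S$ equation and $(0,1+\beta\psi,\beta\eta_{C}\psi,\beta\eta_{A}\psi)$ in the $I$ equation. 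The Jacobian of the map is then $J=M^{-1}P$, so $p_{4}(\lambda)=\det(\lambda M-P)/\det M$ and in particular
\begin{equation*}
p_{4}(1)=\frac{\det(M-P)}{\det M}.
\end{equation*}

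Next I would evaluate $\det(M-P)$. In $M-P$ the entire first column vanishes except for the entry $\mu\psi$ in position $(1,1)$, so a single cofactor expansion along that column reduces the $4\times 4$ determinant to $\mu\psi$ times a $3\times 3$ determinant in the $(I,C,A)$ block; pulling a factor $\psi$ out of each of its three rows gives
\begin{equation*}
\det(M-P)=\mu\psi^{4}\det\begin{pmatrix} C_{1}-\beta & -(\omega+\beta\eta_{C}) & -(\alpha+\beta\eta_{A})\\ -\phi & C_{3} & 0\\ -\rho & 0 & C_{2}\end{pmatrix}.
\end{equation*}
Expanding this $3\times 3$ determinant and regrouping its $\beta$-free and $\beta$-linear parts yields $(C_{1}C_{2}C_{3}-C_{2}\omega\phi-C_{3}\alpha\rho)-\beta(C_{2}C_{3}+\eta_{C}\phi C_{2}+\eta_{A}\rho C_{3})$, which is precisely $\mathcal{D}-\mathcal{N}$ in the notation used in the proof of Lemma~\ref{beta}.

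Finally, $\det M$ factors as $(1+\mu\psi)$ times the $I_{n+1}$-denominator of \eqref{eq:discmodelexplicit}, which was shown to be strictly positive in the proof of Theorem~\ref{positiv}; hence $\det M>0$. Since $\mu,\psi>0$ and $\mathcal{D}>0$, we conclude that $p_{4}(1)=\mu\psi^{4}(\mathcal{D}-\mathcal{N})/\det M$ is positive exactly when $\mathcal{N}<\mathcal{D}$, i.e. when $\mathcal{R}_{0}<1$. The only delicate point is the bookkeeping in the two middle steps: correctly linearizing the nonlocal incidence term so that $1/N_{n}$ really cancels, and identifying the $3\times 3$ determinant with $\mathcal{D}-\mathcal{N}$ through the definitions of $C_{1},C_{2},C_{3}$. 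Once these are settled the claim is immediate, and notably it does not require the sharper estimate $\beta<C_{1}$ of Lemma~\ref{beta}, which I expect to be needed only for the remaining Schur--Cohn conditions.
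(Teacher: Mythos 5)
Your proof is correct, and I verified the two computations you flagged as delicate: the linearization of the implicit scheme \eqref{eq:discmodel} at $E_{0}$ does give $M u_{n+1}=P u_{n}$ with your $M$ and $P$ (the cancellation $S^{\ast}/N^{\ast}=1$ in the incidence term is right, since $\tilde{\lambda}^{\ast}=0$ kills the $\delta S$ contribution), the identity $M-P=-\psi J(E_{0})$ holds entrywise, your $3\times 3$ determinant does equal $\mathcal{D}-\mathcal{N}$ (one checks $C_{1}C_{2}C_{3}-C_{3}\alpha\rho-C_{2}\omega\phi=\rho C_{3}(\mu+d)+\mu C_{2}(C_{3}+\phi)$, reconciling the two expressions for $\mathcal{D}$ in the paper), and $\det M=(1+\mu\psi)$ times the $I_{n+1}$-denominator of \eqref{eq:discmodelexplicit}, which is positive by the proof of Theorem~\ref{positiv}. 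However, your route is genuinely different from the paper's, and you are in fact proving the statement for a different polynomial. The paper takes as ``the linearization of \eqref{eq:discmodelexplicit}'' the matrix $J(E_{0})$ itself --- i.e.\ the continuous-time Jacobian, with no $\psi$-dependence --- factors its characteristic polynomial as $p_{4}(\lambda)=(-\mu-\lambda)p_{3}(\lambda)$, and proves $p_{3}(1)<0$ through a chain of inequalities extracted from $\mathcal{R}_{0}<1$ (rearrangements of $\mathcal{N}<\mathcal{D}$ of the same kind as in Lemma~\ref{beta}, though, as you predicted, the lemma itself is not invoked for this condition; the paper's opening sentence ``Since $p_{4}(1)>0$\dots'' is stated backwards, but the substantive work is the bound on $p_{3}(1)$). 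You instead apply the Schur--Cohn criterion to the Jacobian $M^{-1}P$ of the actual discrete map --- conceptually the right object, since Theorem~\ref{jury} concerns eigenvalues inside the unit disk --- and the reduction $p_{4}(1)=\det(M-P)/\det M=\mu\psi^{4}(\mathcal{D}-\mathcal{N})/\det M$ converts the first condition into the clean equivalence $p_{4}(1)>0\iff\mathcal{R}_{0}<1$, uniformly in the step size $h$ and with no inequality estimates at all. What your approach costs within the paper's architecture: your coefficients $p_{1},\dots,p_{4}$ are $\psi$-dependent and differ from the paper's, which are carried unchanged into Propositions~\ref{C2} and \ref{C3} and Theorem~\ref{local}; adopting your linearization would therefore require recomputing the remaining Schur--Cohn conditions for $\det(\lambda M-P)/\det M$, whereas the paper's choice keeps a single fixed polynomial throughout, at the price of feeding a continuous-time Jacobian into a discrete-stability criterion.
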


\begin{proof}
The linearization of \eqref{eq:discmodelexplicit} at the DFE $E_{0}$ is:
\[
J(E_{0})=\left(
\begin{array}{cccc}
-\mu & -\beta  &-\beta \eta_{C} &-\beta\eta_{A}  \\
0&   \beta-(\rho+\phi+\mu)&\beta \eta_{c} + \omega & \beta \eta_{A}+\alpha \\
0&  \phi  & -(\omega + \mu)  & 0\\
0&  \rho & 0  & -(\alpha + \mu + d)
\end{array}
\right)=\left(
\begin{array}{cccc}
-\mu & -\beta  &-\beta \eta_{C} &-\beta\eta_{A}  \\
0&   \beta-C_{1}&\beta \eta_{c} + \omega & \beta \eta_{A}+\alpha \\
0&  \phi  & -C_{3}  & 0\\
0&  \rho & 0  & -C_{2}
\end{array}
\right).
\]
The characteristic polynomial of $J(E_{0})$ has the following expression:
\begin{equation*}
p_{4}(\lambda)=\lambda^{4}+p_{1}\lambda^{3}
+p_{2} \lambda^{2}+p_{3}\lambda +p_{4}=(-\mu-\lambda)p_{3}(\lambda),
\end{equation*}
where $p_{3}(\lambda)$ is given by
\begin{equation*}
\begin{split}
p_{3}(\lambda)&=-\lambda^{3}+(\beta-C_{1} -C_{2}-C_{3})\lambda^{2}
+((\beta-C_{1}) (C_{2}+C_{3}) -C_{2}C_{3}+(\beta \eta_{A}+\alpha)\rho 
+(\beta \eta_{C} + \omega) \phi)\lambda \\
&\quad +(-C_{1}C_{2}C_{3}+C_{2}C_{3}\beta +C_{3} \alpha \rho +C_{3} \beta 
\eta_{A} \rho +C_{2} \beta \eta_{C} \phi + C_{2} \phi \omega), 
\end{split}
\end{equation*}
that is,
\begin{equation*}
\begin{split}
p_{3}(\lambda)
&=-\lambda^{3}+(\beta-C_{1} -C_{2}-C_{3})\lambda^{2}
+\left((\beta-C_{1}) (C_{2}+C_{3}) -C_{2}C_{3}+(\beta \eta_{A}
+\alpha)\rho +(\beta \eta_{C} + \omega) \phi\right)
\lambda \\
&\quad +(\mathcal{N}-\mathcal{D}).
\end{split}
\end{equation*}
Since $p_{4}(1)>0$, we have $(-\mu -1)p_{3}(1)>0$, 
and recalling that $\mu>0$, we conclude that 
\begin{equation*}
p_{3}(1)=-1+(\beta -C_{1}-C_{2}-C_{3})+ (\beta-C_{1})(C_{2}+C_{3})
-C_{2}C_{3}+ \rho(\beta \eta_{A} + \alpha)
+ \phi(\beta \eta_{C}+\omega) - \mathcal{D}(1-\mathcal{R}_{0})
\end{equation*}
is negative: $p_{3}(1)<0$. If $\mathcal{R}_{0} <1$, then
\begin{equation*}
0<\rho(\beta \eta_{A} + \alpha)< -(\beta-C_{1})C_{2}
-\frac{\phi C_{2} (\beta \eta_{C} + \omega)}{C_{3}}
\end{equation*}
and
\begin{equation*}
(C_{1}-\beta)C_{2}-\frac{\phi C_{2} (\beta \eta_{C} + \omega)}{C_{3}}>0
\quad \textrm{ or }\quad 
\phi (\beta \eta_{C} + \omega)<C_{3}(C_{1}-\beta).
\end{equation*}
Therefore,
\begin{align*}
\rho(\beta \eta_{A} + \alpha)+\phi (\beta \eta_{C} 
+ \omega)<(C_{1}-\beta) C_{2}+\phi (\beta \eta_{C} 
+ \omega)\left( 1-\frac{C_{2}}{C_{3}}\right)
\end{align*}
and
\begin{align*}
p_{3}(1) 
&<-1+(\beta -C_{1}-C_{2}-C_{3})- (C_{1}-\beta)C_{3}-C_{2}C_{3}
- \mathcal{D}(1-\mathcal{R}_{0})+\phi (\beta \eta_{C} 
+ \omega)\left( 1-\frac{C_{2}}{C_{3}}\right)\\
&=-1+(\beta -C_{1}-C_{2}-C_{3})- (C_{1}-\beta)C_{3}-C_{2}C_{3}
- \mathcal{D}(1-\mathcal{R}_{0})+\phi (\beta \eta_{C} 
+ \omega)-\phi (\beta \eta_{C} + \omega)\left(\frac{C_{2}}{C_{3}}\right)\\
&<-1+(\beta -C_{1}-C_{2}-C_{3})-C_{2}C_{3}
- \mathcal{D}(1-\mathcal{R}_{0})-\phi (\beta \eta_{C} 
+ \omega)\left(\frac{C_{2}}{C_{3}}\right)< 0.
\end{align*}
We conclude that the first condition of Theorem~\ref{jury}
is satisfied if $\mathcal{R}_{0}<1$.
\end{proof}

\begin{proposition}
\label{C2}
If $\mathcal{R}_{0}<1$, $C_{2}<1$, $C_{3}<1$ and 
$\beta < \displaystyle \frac{C_{2} C_{3}}{(1-C_{2})(1-C_{3})}$, 
then the second condition of Theorem~\ref{jury}, that is, 
$(-1)^{4}p_{4}(-1)>0$, is satisfied.
\end{proposition}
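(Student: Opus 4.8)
The plan is to mirror the two-step reduction already used for Proposition~\ref{C1}. Since $(-1)^4=1$, the second Schur--Cohn condition is simply $p_4(-1)>0$. Using the factorization $p_4(\lambda)=(-\mu-\lambda)\,p_3(\lambda)$ established in the proof of Proposition~\ref{C1}, I would evaluate at $\lambda=-1$ to get $p_4(-1)=(1-\mu)\,p_3(-1)$. The hypothesis $C_3<1$ reads $\omega+\mu<1$, so $0<\mu<1$ and the factor $1-\mu$ is strictly positive; hence the whole proposition reduces to proving that $p_3(-1)>0$.

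Next I would expand
$p_3(-1)=1+(\beta-C_1-C_2-C_3)-\big((\beta-C_1)(C_2+C_3)-C_2C_3+(\beta\eta_A+\alpha)\rho+(\beta\eta_C+\omega)\phi\big)+(\mathcal{N}-\mathcal{D})$
and substitute $\mathcal{N}=\beta(C_2C_3+\eta_A\rho C_3+\eta_C\phi C_2)$, $\mathcal{D}=\rho C_3(\mu+d)+\mu C_2(C_3+\phi)$ and $C_1=\rho+\phi+\mu$. The aim of the ensuing algebra is to collect everything so that the factors $1-C_2$ and $1-C_3$ appear explicitly: the $\beta$-free monomials in $1,C_2,C_3$ combine into $(1-C_2)(1-C_3)$, the pure $\beta$-monomials (including the $\beta C_2C_3$ coming from $\mathcal{N}$) combine into $\beta(1-C_2)(1-C_3)$, the cross terms of $\mathcal{N}$ pair with the linear-coefficient terms to give $-\beta\eta_A\rho(1-C_3)-\beta\eta_C\phi(1-C_2)$, and the surviving monomials are manifestly negative. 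Concretely, the identity I am aiming for is
\begin{equation*}
p_3(-1)=(1-C_2)(1-C_3)+\beta\big[(1-C_2)(1-C_3)-\eta_A\rho(1-C_3)-\eta_C\phi(1-C_2)\big]-C_1(1-C_2-C_3)-R,
\end{equation*}
where $R:=\alpha\rho+\omega\phi+\rho C_3(\mu+d)+\mu C_2(C_3+\phi)\ge 0$.

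With this decomposition the strategy becomes transparent. Because $C_2,C_3<1$, the leading block $(1-C_2)(1-C_3)$ is positive. If the bracket multiplying $\beta$ is nonnegative, it only helps; if it is negative, I would invoke precisely the hypothesis $\beta<\tfrac{C_2C_3}{(1-C_2)(1-C_3)}$, equivalently $\beta(1-C_2)(1-C_3)<C_2C_3$, to keep the $\beta$-contribution under control. Lemma~\ref{beta}, itself a consequence of $\mathcal{R}_0<1$, supplies $\beta<C_1$, which is what is needed to dominate both the cross terms and the sign-indefinite contribution $-C_1(1-C_2-C_3)$.

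The main obstacle is exactly this final estimate. The term $-C_1(1-C_2-C_3)$ has indeterminate sign (it is negative precisely when $C_2+C_3<1$), and $R$ gathers several genuinely negative contributions, so the argument must show that the positive block $(1-C_2)(1-C_3)$, together with the $\beta$-term controlled by the calibrated bound on $\beta$ and the inequality $\beta<C_1$, outweighs all of them. I expect the careful bookkeeping of these competing terms — rather than any single conceptual step — to be the delicate part, and I expect the three hypotheses $C_2<1$, $C_3<1$, and $\beta<\tfrac{C_2C_3}{(1-C_2)(1-C_3)}$ to be exactly those that close the inequality $p_3(-1)>0$, whence $(-1)^4p_4(-1)>0$ follows.
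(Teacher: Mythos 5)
Your reduction step is correct and in fact tidier than the paper's: from the factorization $p_{4}(\lambda)=(-\mu-\lambda)p_{3}(\lambda)$ you get $(-1)^{4}p_{4}(-1)=(1-\mu)\,p_{3}(-1)$, and you derive $\mu<1$ honestly from $C_{3}=\omega+\mu<1$, so the target becomes $p_{3}(-1)>0$. (The paper instead opens with ``Since $(-1)^{4}p_{4}(-1)>0$, we have $(-\mu+1)p_{3}(-1)>0$'', i.e., it assumes the conclusion to identify the target.) Your algebraic identity also checks out: expanding $p_{3}(-1)=1+\beta-C_{1}-C_{2}-C_{3}-(\beta-C_{1})(C_{2}+C_{3})+C_{2}C_{3}-\rho(\beta\eta_{A}+\alpha)-\phi(\beta\eta_{C}+\omega)+\mathcal{N}-\mathcal{D}$ and regrouping gives exactly your decomposition $p_{3}(-1)=(1-C_{2})(1-C_{3})+\beta\bigl[(1-C_{2})(1-C_{3})-\eta_{A}\rho(1-C_{3})-\eta_{C}\phi(1-C_{2})\bigr]-C_{1}(1-C_{2}-C_{3})-R$ with $R=\alpha\rho+\omega\phi+\mathcal{D}$. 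This is a genuinely different organization from the paper's, which instead applies the consequence \eqref{eq:condpart} of $\mathcal{R}_{0}<1$ twice, to eliminate the terms $\rho(\beta\eta_{A}+\alpha)$ and $\phi(\beta\eta_{C}+\omega)$, and lands on the lower bound $\phi(\beta\eta_{C}+\omega)\frac{C_{2}}{C_{3}}+C_{1}C_{2}C_{3}\mathcal{R}_{0}+(C_{3}\rho\alpha+C_{2}\phi\omega)(1-\mathcal{R}_{0})+(1-C_{2})(1-C_{3})+\beta-C_{1}(1+C_{2}C_{3})$.

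The genuine gap is that you never prove the final inequality; you only announce that ``careful bookkeeping'' of $(1-C_{2})(1-C_{3})$, of $\beta(1-C_{2})(1-C_{3})<C_{2}C_{3}$, and of Lemma~\ref{beta} should close it. It cannot: take $\mu=d=0.1$, $\alpha=\omega=0.3$, $\rho=0.1$, $\phi=1$, $\eta_{A}=1.35$, $\eta_{C}=0.04$, $\beta=0.28$. Then $C_{1}=1.2$, $C_{2}=0.5$, $C_{3}=0.4$, $\mathcal{R}_{0}\approx 0.98<1$, and $\beta=0.28<\frac{C_{2}C_{3}}{(1-C_{2})(1-C_{3})}=\frac{2}{3}$, so every hypothesis of the proposition holds; yet in your decomposition $(1-C_{2})(1-C_{3})=0.3$, the $\beta$-bracket contributes only about $0.056$ (it is even positive here), while $-C_{1}(1-C_{2}-C_{3})=-0.12$ and $-R\approx -0.408$, giving $p_{3}(-1)\approx -0.17<0$. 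So no arrangement of the terms you isolated can be dominated using the stated hypotheses alone: $R$ by itself already exceeds the entire positive block. It is worth knowing that the paper's own proof stalls at exactly the same spot: its chain of inequalities is valid up to the displayed lower bound, but the concluding sentence that $C_{2}<1$, $C_{3}<1$ and the bound on $\beta$ force positivity is an unproved assertion (that lower bound contains $\beta-C_{1}(1+C_{2}C_{3})<0$ by Lemma~\ref{beta}, and evaluates to about $-0.23$ for the parameters above). Your instinct that the final estimate is ``the delicate part'' was exactly right, but a complete argument would have to either carry it out under strengthened hypotheses or restructure the claim; as written, your proposal establishes the reduction and a correct identity, not the proposition.
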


\begin{proof}
Since $(-1)^{4} \cdot p_{4}(-1)>0$, we have
$(-\mu +1)p_{3}(-1)>0$ and $\mu<1$, so $p_{3}(-1)>0$. 
It can be seen that 
\begin{align*}
p_{3}(-1)&=1+\beta -C_{1}-C_{2}-C_{3}-(\beta-C_{1})(C_{2}+C_{3})
+C_{2}C_{3}- \rho(\beta \eta_{A} + \alpha)
- \phi(\beta \eta_{C}+\omega) + \mathcal{D}(\mathcal{R}_{0}-1).
\end{align*}
If $\mathcal{R}_{0}<1$, then
\begin{equation*}
-\rho(\beta \eta_{A} + \alpha)
> -(C_{1}-\beta)C_{2}+\frac{\phi C_{2} (\beta \eta_{C} + \omega)}{C_{3}}
\end{equation*}
and
\begin{equation*}
-(C_{1}-\beta)C_{2}+\frac{\phi C_{2} (\beta \eta_{C} + \omega)}{C_{3}}<0
\quad \textrm{ or } \quad 
-\phi (\beta \eta_{C} + \omega)>C_{3}(\beta-C_{1}).
\end{equation*}
Thus,
\begin{align}
\label{eq:condpart}
-\rho(\beta \eta_{A} + \alpha)-\phi (\beta \eta_{C} + \omega)
> -(C_{1}-\beta) C_{2}+\phi (\beta \eta_{C} 
+ \omega)\left( \frac{C_{2}}{C_{3}}-1 \right)
\end{align}
and
\begin{align*}
p_{3}&(-1) > 1+\beta -C_{1}-C_{2}-C_{3}
+ (C_{1}-\beta)C_{3}+C_{2}C_{3}
+\mathcal{D}(\mathcal{R}_{0}-1)+\phi (\beta \eta_{C} 
+ \omega)\left( \frac{C_{2}}{C_{3}}-1\right)\\
&=1+\beta -C_{1}-C_{2}-C_{3}+ (C_{1}-\beta)C_{3}+C_{2}C_{3}
+ \mathcal{D}(\mathcal{R}_{0}-1)-\phi (\beta \eta_{C} 
+ \omega)+\phi (\beta \eta_{C} + \omega)\left(\frac{C_{2}}{C_{3}}\right)\\
&>1+\beta -C_{1}-C_{2}-C_{3}+C_{2}C_{3}+\mathcal{D}(\mathcal{R}_{0}-1)
+\phi (\beta \eta_{C} + \omega)\left(\frac{C_{2}}{C_{3}}\right)\\
&=\phi (\beta \eta_{C} + \omega)\left(\frac{C_{2}}{C_{3}}\right)
+(1-C_{2})(1-C_{3})+\beta -C_{1}+\mathcal{D}(\mathcal{R}_{0}-1)\\
&=\phi (\beta \eta_{C} + \omega)\left(\frac{C_{2}}{C_{3}}\right)+(1-C_{2})(1-C_{3})
+\beta -C_{1}+(C_{1}C_{2}C_{3}-C_{3}\rho\alpha-C_{2}\phi\omega)(\mathcal{R}_{0}-1)\\
&=\phi (\beta \eta_{C} + \omega)\left(\frac{C_{2}}{C_{3}}\right)+C_{1}C_{2}C_{3}\mathcal{R}_{0}
+(C_{3}\rho\alpha+C_{2}\phi\omega)(1-\mathcal{R}_{0})+(1-C_{2})(1-C_{3})+\beta-C_{1}(1+C_{2}C_{3}).
\end{align*}
We conclude that if $C_{2}<1$, $C_{3}<1$, and 
$\beta < \displaystyle \frac{C_{2} C_{3}}{(1-C_{2})(1-C_{3})}$ 
are satisfied, then $p_{3}(-1)>0$.
\end{proof}

\begin{proposition}
\label{C3}
If $\mathcal{R}_{0}<1$, 
$\mu < \dfrac{1}{\mathcal{D}(1-\mathcal{R}_{0})}$, 
$p_{2}<1+p_{4}$, and
\begin{equation}
\label{eq:B3}
\dfrac{-(1-p_{4}^{2})(1+p_{2}+p_{4})}{(p_{1}+p_{3})}
<(p_{4}p_{1}-p_{3}) 
<\dfrac{(1-p_{4})^{2}(1+p_{4}-p_{2})}{(p_{1}-p_{3})},
\end{equation}
then the third condition of Theorem~\ref{jury} is satisfied.
\end{proposition}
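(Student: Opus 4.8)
The plan is to specialize condition (iii) of Theorem~\ref{jury} to the case $k=4$ and then match each resulting scalar inequality to one of the hypotheses. Taking $L\pm R$ with $k=4$ produces the two $3\times 3$ matrices
\[
B_{3}^{\pm}=\left(\begin{array}{ccc}
1 & 0 & \pm p_{4}\\
p_{1} & 1\pm p_{4} & \pm p_{3}\\
p_{2}\pm p_{4} & p_{1}\pm p_{3} & 1\pm p_{2}
\end{array}\right).
\]
The inners of a $3\times 3$ matrix are the matrix itself together with its central entry, so ``positive innerwise'' reduces to four scalar inequalities: $\det B_{3}^{+}>0$, $\det B_{3}^{-}>0$, and the central entries $1+p_{4}>0$ and $1-p_{4}>0$. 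I would organize the proof around establishing these four inequalities.

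First I would dispose of the two central-entry conditions, which together say $|p_{4}|<1$. Using the factorization $p_{4}(\lambda)=(-\mu-\lambda)p_{3}(\lambda)$ established in the proof of Proposition~\ref{C1} and evaluating at $\lambda=0$, the constant coefficient is $p_{4}=-\mu\,p_{3}(0)=\mu(\mathcal{D}-\mathcal{N})=\mu\mathcal{D}(1-\mathcal{R}_{0})$. Hence $\mathcal{R}_{0}<1$ forces $p_{4}>0$, while the bound $\mu<1/[\mathcal{D}(1-\mathcal{R}_{0})]$ is exactly $\mu\mathcal{D}(1-\mathcal{R}_{0})<1$, i.e. $p_{4}<1$. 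Thus $0<p_{4}<1$, and both $1\pm p_{4}$ are positive.

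The core of the argument is the two determinant conditions. Expanding the $3\times 3$ determinants (for instance along the first column) and collecting terms, I expect the exact identities
\[
\det B_{3}^{+}=(p_{4}p_{1}-p_{3})(p_{1}+p_{3})+(1-p_{4}^{2})(1+p_{2}+p_{4}),
\]
\[
\det B_{3}^{-}=(1-p_{4})^{2}(1+p_{4}-p_{2})-(p_{4}p_{1}-p_{3})(p_{1}-p_{3}).
\]
Granting these, $\det B_{3}^{+}>0$ is, after dividing by $p_{1}+p_{3}$, precisely the left half of \eqref{eq:B3}, and $\det B_{3}^{-}>0$ is, after dividing by $p_{1}-p_{3}$, precisely its right half. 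The hypothesis $p_{2}<1+p_{4}$ enters here to make the numerator $(1-p_{4})^{2}(1+p_{4}-p_{2})$ of the right bound positive, so that the admissible interval for $p_{4}p_{1}-p_{3}$ in \eqref{eq:B3} is consistent with a negative left bound and a positive right bound. Combining this with $0<p_{4}<1$ yields all four scalar inequalities, whence $B_{3}^{\pm}$ are positive innerwise and Theorem~\ref{jury}(iii) holds.

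The main obstacle is twofold. The laborious-but-routine part is verifying the two determinant identities above: the bookkeeping in the $3\times 3$ expansions is error-prone, and the whole reduction rests on these determinants factoring \emph{exactly} into the shapes appearing in \eqref{eq:B3}. The genuinely subtle point is the sign of $p_{1}\pm p_{3}$: passing between ``$\det B_{3}^{\pm}>0$'' and the quotient form \eqref{eq:B3} multiplies through by $p_{1}+p_{3}$ and by $p_{1}-p_{3}$, and to keep the inequalities pointing in the stated direction one needs $p_{1}+p_{3}>0$ and $p_{1}-p_{3}>0$. I would therefore verify these two sign conditions explicitly from the coefficients of $p_{4}(\lambda)$ (or extract them from the relations already exploited in Propositions~\ref{C1} and~\ref{C2}) before invoking the division; this is where I expect the argument to need the most care.
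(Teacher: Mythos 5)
Your plan follows the paper's proof essentially verbatim: the paper likewise reduces condition (iii) of Theorem~\ref{jury} to the four scalar inequalities $1\pm p_{4}>0$ and $\det B_{3}^{\pm}>0$, obtains $0<p_{4}<1$ from $p_{4}=\mu\mathcal{D}(1-\mathcal{R}_{0})$ together with the hypothesis $\mu<1/\bigl[\mathcal{D}(1-\mathcal{R}_{0})\bigr]$, and rewrites the two determinants in exactly the factored forms you predict, so that \eqref{eq:B3} and $p_{2}<1+p_{4}$ deliver positivity. Your flagged subtlety about the signs of $p_{1}\pm p_{3}$ is well taken and is handled in the paper only partially: it proves $p_{1},p_{2},p_{3}>0$ (via Lemma~\ref{beta} and the inequality \eqref{eq:condpart} from Proposition~\ref{C2}), which settles $p_{1}+p_{3}>0$ exactly as you suggest extracting it, but the sign $p_{1}-p_{3}>0$ needed to divide the second determinant inequality is left unverified there as well, so on that point your proposal is no less complete than the published argument.
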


\begin{proof}
The third condition of Theorem~\ref{jury} is the following:  
the  $3 \times 3$ matrices $B_{3}^{\pm}$ given by 
\begin{equation}
B_{3}^{\pm}=
\left(
\begin{array}{ccc}
1  & 0 & 0  \\
p_{1}   & 1 &0 \\
p_{2}& p_{1}&1 
\end{array}
\right)
\pm
\left(
\begin{array}{ccc}
0&0        &p_{4}   \\
0& p_{4} &p_{3}\\
p_{4}&p_{3}&p_{2}
\end{array}
\right)
\end{equation}
are positive innerwise. Recall that
$p_{4}(\lambda)=\lambda^{4}+p_{1}\lambda^{3}
+p_{2} \lambda^{2}+p_{3}\lambda +p_{4}=(-\mu-\lambda)p_{3}(\lambda)$ 
and
\begin{align*}
p_{1}&=\mu-(\beta-C_{1}-C_{2}-C_{3}),\\
p_{2}&=-\mu(\beta-C_{1}-C_{2}-C_{3}) -((\beta-C_{1}) (C_{2}+C_{3}) 
-C_{2}C_{3}+(\beta \eta_{A}+\alpha)\rho +(\beta \eta_{C} + \omega) \phi),\\
p_{3}&=-\mu((\beta-C_{1}) (C_{2}+C_{3}) -C_{2}C_{3}
+(\beta \eta_{A}+\alpha)\rho +(\beta \eta_{C} + \omega) \phi)-(\mathcal{N}-\mathcal{D}),\\
p_{4}&= -\mu(\mathcal{N}-\mathcal{D}),
\end{align*}
or
\begin{align*}
p_{1}&=C_{1}+C_{2}+C_{3}+\mu-\beta, \\
p_{2}&=\mu(C_{1}+C_{2}+C_{3}-\beta) +(C_{1}-\beta)(C_{2}+C_{3}) 
+C_{2}C_{3}-(\beta \eta_{A}+\alpha)\rho -(\beta \eta_{C} + \omega) \phi,\\
p_{3}&=\mu((C_{1}-\beta) (C_{2}+C_{3}) +C_{2}C_{3}
-(\beta \eta_{A}+\alpha)\rho -(\beta \eta_{C} 
+ \omega) \phi)+\mathcal{D}(1-\mathcal{R}_{0}),\\
p_{4}&= \mu\mathcal{D}(1-\mathcal{R}_{0}).
\end{align*}
Also, if $\mathcal{R}_{0}<1$, then, by Lemma~\ref{beta}, $\beta<C_{1}$. 
Therefore, $p_{1}>0$. If we also consider \eqref{eq:condpart}, 
and apply it to $p_{2}$ and $p_{3}$, we get
\begin{equation*}
p_{2}>\mu(C_{1} +C_{2}+C_{3} -\beta)+C_{2}C_{3}
+\frac{C_{2}\phi}{C_{3}}(\beta \eta_{C}+\omega)>0
\end{equation*}
and
\begin{equation*}
p_{3}>\mu C_{2}C_{3} + \mathcal{D}(1-\mathcal{R}_{0})>0.
\end{equation*}
In other words, $B_{3}^{+}$ and $B_{3}^{-}$ have the following form: 
\begin{equation}
B_{3}^{+}=\left(
\begin{array}{ccc}
1  & 0 & p_{4}  \\
p_{1}   & 1+p_{4} &p_{3}   \\
p_{2}+p_{4}& p_{1}+p_{3}&1+p_{2}
\end{array}
\right)
\quad \textrm{ and }\quad  
B_{3}^{-}=\left(
\begin{array}{ccc}
1  & 0 & -p_{4}  \\
p_{1}   & 1-p_{4} &-p_{3}   \\
p_{2}-p_{4}& p_{1}-p_{3}&1-p_{2}
\end{array}
\right)
\end{equation}
and their inners must be positive:
\begin{enumerate}
\item Regarding $B_{3}^{+}$, we must have
\begin{itemize}
\item[a)] $1+p_{4} >0$;
\item[b)] $|B_{3}^{+}|=(1+p_{4})(1+p_{2})-p_{3}(p_{1}
+p_{3})+p_{4}(p_{1} (p_{1}+p_{3})-(1+p_{4})(p_{2}+p_{4}))>0$.
\end{itemize}
\item Regarding $B_{3}^{-}$, we must have
\begin{itemize}
\item[a)] $1-p_{4} >0$;
\item[b)] $|B_{3}^{-}|=(1-p_{4})(1-p_{2})+p_{3}(p_{1}-p_{3})
-p_{4}(p_{1} (p_{1}-p_{3})-(1-p_{4})(p_{2}-p_{4}))>0$.
\end{itemize}
\end{enumerate}
Note that $p_{4}=\mu\mathcal{D}(1-\mathcal{R}_{0})=\det(J(E_{0}))$ 
and, if $\mathcal{R}_{0}<1$, then $p_{4}>0$. Therefore, 1~a) is satisfied. 
For 2~a) to be satisfied, it is necessary that
\begin{equation}
\label{eq:p4}
p_{4}<1 \Leftrightarrow \mu < \dfrac{1}{\mathcal{D}(1-\mathcal{R}_{0})}.
\end{equation}
Considering 1 b) and 2 b), after some computations, we can rewrite them as
\begin{gather*}
|B_{3}^{+}|=(1-p_{4}^{2})(1+p_{2}+p_{4})+(p_{1}+p_{3})(p_{4}p_{1}-p_{3})>0,\\
|B_{3}^{-}|=(1-p_{4})^{2}(1+p_{4}-p_{2})-(p_{1}-p_{3})(p_{4}p_{1}-p_{3})>0.
\end{gather*}
Thus, from \eqref{eq:p4}, $p_{2}<1+p_{4}$, and \eqref{eq:B3},  
the third condition of Theorem~\ref{jury} is satisfied.
\end{proof}

We are now in condition to prove the main result of this section.

\begin{theorem}
\label{local}
If $C_{2}<1$, $C_{3}<1$, $\beta < \frac{C_{2} C_{3}}{(1-C_{2})(1-C_{3})}$, 
$p_{2}<1+p_{4}$, and \eqref{eq:B3} and \eqref{eq:p4} are satisfied, 
then, provided $\mathcal{R}_{0} < 1$, the disease free equilibrium point 
of the discrete system \eqref{eq:discmodelexplicit} is locally asymptotically stable. 
If the previous conditions are not satisfied, then the disease free 
equilibrium point is unstable.
\end{theorem}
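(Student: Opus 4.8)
The plan is to obtain Theorem~\ref{local} directly as a corollary of the Schur--Cohn criterion (Theorem~\ref{jury}), using the three propositions already proved as the verification of its three hypotheses. The linearization at $E_0$ has characteristic polynomial $p_4(\lambda)=\lambda^4+p_1\lambda^3+p_2\lambda^2+p_3\lambda+p_4$, and by Theorem~\ref{jury} the fixed point is locally asymptotically stable precisely when all four zeros of $p_4$ lie strictly inside the unit disk, i.e. when items (i)--(iii) of that criterion all hold. Thus the whole task reduces to assembling the three items, each of which has been isolated into a separate proposition.

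Concretely, I would first invoke Proposition~\ref{C1}, which under $\mathcal{R}_0<1$ gives item (i), $p_4(1)>0$. Next, Proposition~\ref{C2} supplies item (ii), $(-1)^4 p_4(-1)>0$, exactly under the extra hypotheses $C_2<1$, $C_3<1$ and $\beta<\frac{C_2 C_3}{(1-C_2)(1-C_3)}$ that appear in the statement. Finally, Proposition~\ref{C3} yields item (iii) — positive innerwiseness of $B_3^{\pm}$ — under \eqref{eq:p4} (equivalently $\mu<\frac{1}{\mathcal{D}(1-\mathcal{R}_0)}$), the inequality $p_2<1+p_4$, and the two-sided bound \eqref{eq:B3}. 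The hypotheses listed in Theorem~\ref{local} are exactly the union of the hypotheses of these three propositions, so all three Schur--Cohn conditions hold simultaneously; hence every zero of $p_4$ lies inside the unit disk and $E_0$ is locally asymptotically stable.

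For the instability half I would lean on the fact that Theorem~\ref{jury} is an \emph{if, and only if,} statement, so that local asymptotic stability of $E_0$ is equivalent to items (i)--(iii) holding together. If the listed conditions fail — in particular in the epidemiologically relevant regime $\mathcal{R}_0>1$ — then a Schur--Cohn condition is violated: tracking the sign of $p_4(1)=(-\mu-1)p_3(1)$ as $\mathcal{R}_0$ crosses $1$ shows that item (i) can no longer be guaranteed, so by the necessity direction of the criterion $p_4$ acquires a zero on or outside the unit circle, which forces $E_0$ to be unstable.

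The main obstacle is bookkeeping rather than a new idea: one must check that the hypotheses of the three propositions are mutually consistent and that their conjunction is precisely what is assumed in Theorem~\ref{local} — for instance, that the positivity $p_1,p_2,p_3,p_4>0$ used inside Propositions~\ref{C1}--\ref{C3} (via Lemma~\ref{beta} and the bound \eqref{eq:condpart}) is already implied by $\mathcal{R}_0<1$, so that no hidden extra condition is smuggled in. A secondary point of care is the instability clause ``if the previous conditions are not satisfied'', which must be read as ``if some Schur--Cohn condition is violated''; to keep the argument rigorous one should make explicit which item breaks down (the natural candidate being item (i) when $\mathcal{R}_0>1$) so that the necessity direction of Theorem~\ref{jury} genuinely applies.
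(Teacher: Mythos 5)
Your proposal is correct and takes essentially the same route as the paper, whose proof is exactly this assembly: the stability half follows by combining Theorem~\ref{jury} with Propositions~\ref{C1}, \ref{C2} and \ref{C3}, and the instability half from the necessity direction of the Schur--Cohn criterion. If anything, your write-up is more explicit than the paper's two-sentence proof, which simply asserts that if any condition fails then at least one root of the characteristic polynomial lies outside the unit circle.
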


\begin{proof}
The result follows by Theorem~\ref{jury} 
and Propositions~\ref{C1}, \ref{C2} and \ref{C3}. 
If any of the conditions enumerated are not satisfied, 
at least one of the roots of the characteristic polynomial lies outside 
the unit circle, so the disease free equilibrium point is unstable.
\end{proof}

% ----------------

\subsubsection{Global stability of the equilibrium points}

Now we prove that $\mathcal{R}_{0}$ is a critical value for global stability: 
when $\mathcal{R}_{0}<1$,  the disease free equilibrium point 
is globally asymptotically stable; when $\mathcal{R}_{0}>1$, 
the endemic equilibrium is globally asymptotically stable.

\begin{theorem}
\label{globalDFE}
If $\mathcal{R}_{0} <1$, then the DFE point 
of the discrete-time SICA  model \eqref{eq:discmodel} 
is globally asymptotically stable.
\end{theorem}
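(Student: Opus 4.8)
The plan is to construct a discrete Lyapunov functional from the infected compartments and show it is non-increasing along trajectories exactly when $\mathcal{R}_0<1$, transporting the continuous argument of \cite{MyID:318} to the non-local NSFD setting. I work inside the positively invariant, bounded region $\tilde{\Omega}$ guaranteed by Theorems~\ref{positiv} and \ref{conservation}, and take the candidate
\[
V_n = I_n + \frac{\omega+\beta\eta_C}{C_3}\,C_n + \frac{\alpha+\beta\eta_A}{C_2}\,A_n ,
\]
whose weights are positive, so that $V_n\ge 0$ on $\tilde{\Omega}$ and $V_n=0$ precisely at the DFE $E_0$ of \eqref{eq:dfe}.

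First I would form $\Delta V_n=V_{n+1}-V_n$ by substituting the three infected equations of \eqref{eq:discmodel} (the difference quotients for $I$, $C$, $A$, each multiplied by $\psi$). The weights are chosen exactly so that, after bounding the force-of-infection term by $S\le N$, i.e. $\frac{\beta S_{n+1}}{N_n}(I_n+\eta_C C_n+\eta_A A_n)\le\beta(I_n+\eta_C C_n+\eta_A A_n)$, the coefficients of the $C$- and $A$-terms cancel and the coefficient of the $I$-term collapses through the identity
\[
(\beta-C_1)C_2C_3+(\omega+\beta\eta_C)\phi C_2+(\alpha+\beta\eta_A)\rho C_3=\mathcal{N}-\mathcal{D}=\mathcal{D}(\mathcal{R}_0-1),
\]
which is non-positive when $\mathcal{R}_0\le 1$; Lemma~\ref{beta} ($\beta<C_1$) fixes the remaining signs. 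This reproduces, in the discrete setting, the continuous relation $\dot V\le \frac{\mathcal{D}(\mathcal{R}_0-1)}{C_2C_3}\,I\le 0$.

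The reduction above is exact only when the two time levels coincide. Because the scheme evaluates $S$ at level $n+1$ while $N$ and the infected variables in $\tilde{\lambda}_n$ sit at level $n$, I cannot telescope $\Delta V_n$ pointwise; instead I would close the argument asymptotically. Theorem~\ref{conservation} gives $N_n\to\Lambda/\mu$, so on the compact invariant $\omega$-limit set one has $N=\Lambda/\mu$, the ratio $S_{n+1}/N_n$ obeys $S/N\le 1$, and consecutive iterates agree; there the computation of the previous paragraph is genuinely valid and yields $\Delta V\le 0$ with equality forcing $I=C=A=0$. A discrete LaSalle invariance principle then confines the limit set to $\{\,I=C=A=0\,\}$, and the total-population relation $S_n=N_n-I_n-C_n-A_n\to\Lambda/\mu$ identifies it with $E_0$, giving global asymptotic stability of the DFE.

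The hard part will be precisely this mismatch of time indices in the non-local incidence: near $E_0$ with $N_n<\Lambda/\mu$ one actually has $S_{n+1}>N_n$, so the clean pointwise bound $S_{n+1}/N_n\le 1$ used in the continuous proof fails, and the $\beta$-weighted step-$n$ infected terms produced by $\tilde{\lambda}_n$ do not combine with the step-$(n+1)$ terms coming from $-C_1 I_{n+1}$, $\omega C_{n+1}$, $\alpha A_{n+1}$. Controlling these index-shift discrepancies---either by routing everything through the conservation law and the $\omega$-limit set as above, or by a summation/comparison estimate exploiting $N_n\to\Lambda/\mu$---is the delicate step; once it is in place, the algebraic collapse to $\mathcal{D}(\mathcal{R}_0-1)$ and the discrete LaSalle conclusion are routine.
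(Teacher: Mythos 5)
Your weights are well chosen---the identity $(\beta-C_1)C_2C_3+(\omega+\beta\eta_C)\phi C_2+(\alpha+\beta\eta_A)\rho C_3=\mathcal{N}-\mathcal{D}$ checks out---and you correctly isolate the obstruction: with $V_n=I_n+\frac{\omega+\beta\eta_C}{C_3}C_n+\frac{\alpha+\beta\eta_A}{C_2}A_n$, the incidence enters $V_{n+1}-V_n$ at level $n$ while the outflow terms sit at level $n+1$, so the difference does not collapse pointwise. But the idea that resolves this, and the one the paper actually uses, is absent from your proposal: augment the Lyapunov sequence with the cross term $\psi\tilde{\lambda}_n S_{n+1}$, i.e.\ take
\begin{equation*}
V(n)=I_{n}+\frac{\omega}{C_{3}}C_{n}+\frac{\alpha}{C_{2}}A_{n}+\psi\,\tilde{\lambda}_{n}S_{n+1}.
\end{equation*}
Then the incoming term $\psi\tilde{\lambda}_n S_{n+1}$ produced by the $I$-equation cancels exactly against the difference of the cross terms, and $V(n+1)-V(n)$ telescopes to $\psi\bigl(\tilde{\lambda}_{n+1}S_{n+2}-\mathcal{D}\,I_{n+1}/(C_2C_3)\bigr)$, with \emph{every} remaining quantity at level $n+1$; a single bound $\tilde{\lambda}_{n+1}S_{n+2}\le\beta(I_{n+1}+\eta_C C_{n+1}+\eta_A A_{n+1})$ (the $\varepsilon$-device opening the paper's proof is what controls $S_{n+2}$ asymptotically) then yields $V(n+1)-V(n)\le\psi I_{n+1}\mathcal{D}(\mathcal{R}_0-1)/(C_2C_3)\le 0$, and the conclusion follows from monotonicity of $V$. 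In fact your weights combine with this cross term even more cleanly than the paper's own choice: the paper uses $\omega/C_3$, $\alpha/C_2$ and then needs the extra pointwise estimates $C_{n+1}\le\phi I_{n+1}/C_3$ and $A_{n+1}\le\rho I_{n+1}/C_2$, which your $\beta\eta_C$, $\beta\eta_A$ terms would render unnecessary. So the one missing trick would have completed your computation.

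The substitute route you sketch does not survive scrutiny. First, Theorem~\ref{conservation} does \emph{not} give $N_n\to\Lambda/\mu$ along arbitrary trajectories: its displayed inequality is one-sided (the term $-d\psi A_{n+1}$ is discarded) and yields only $\limsup_n N_n\le\Lambda/\mu$; indeed $N$ remains strictly below $\Lambda/\mu$ whenever $A$ persists, e.g.\ $N^*<\Lambda/\mu$ at the endemic equilibrium. Hence asserting $N\equiv\Lambda/\mu$ on the $\omega$-limit set presupposes $A\to 0$, which is precisely what is to be proved---the step is circular. Second, the claim that ``consecutive iterates agree'' on the $\omega$-limit set is false: that set is invariant under the map \eqref{eq:discmodelexplicit}, not a set of fixed points, so your pointwise reduction is not ``genuinely valid'' there. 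Consequently $\Delta V\le 0$ is never established on the limit set, the discrete LaSalle principle cannot be invoked, and even if it could, LaSalle delivers attractivity, whereas Lyapunov stability requires the monotone decrease of $V$ along actual trajectories---exactly what the cross term $\psi\tilde{\lambda}_nS_{n+1}$ supplies.
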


\begin{proof}
For any $\varepsilon >0$, there exists an integer $n_{0}$ 
such that, for any $n\geq n_{0}$, $S_{n+1} < \frac{\Lambda}{\mu}+ \varepsilon$. 
Consider the sequence $\{ V(n)\}_{n=0}^{+ \infty}$ defined by
\begin{equation}
V(n)=I_{n}+\frac{\omega}{C_{3}}C_{n}+\frac{\alpha}{C_{2}}A_{n}+\psi\tilde{\lambda}_{n}S_{n+1}.
\end{equation}
For any $n \geq n_{0}$, 
\begin{align*}
V(n+1)&-V(n)=I_{n+1}+\frac{\omega}{C_{3}}C_{n+1}+\frac{\alpha}{C_{2}}A_{n+1}
+\psi\tilde{\lambda}_{n+1}S_{n+2} - I_{n}-\frac{\omega}{C_{3}}C_{n}
-\frac{\alpha}{C_{2}}A_{n}-\psi\tilde{\lambda}_{n}S_{n+1}\\
&=-\psi C_{1} I_{n+1} + \alpha \psi A_{n+1} + \omega \psi C_{n+1} 
+\frac{\omega}{C_{3}}C_{n+1} +\frac{\alpha}{C_{2}}A_{n+1}+ \psi
\tilde{\lambda}_{n+1}S_{n+2}-\frac{\omega}{C_{3}}C_{n}-\frac{\alpha}{C_{2}}A_{n}\\
&=\psi \tilde{\lambda}_{n+1}S_{n+2}+\frac{\omega}{C_{3}}(C_{n+1}-C_{n})
+\frac{\alpha}{C_{2}}(A_{n+1}-A_{n})-\psi C_{1} I_{n+1} 
+ \alpha \psi A_{n+1} + \omega \psi C_{n+1}\\
&=\psi\left( \tilde{\lambda}_{n+1}S_{n+2} + \frac{\omega}{C_{3}}(\phi I_{n+1}
-C_{3}C_{n+1})+\frac{\alpha}{C_{2}}(\rho I_{n+1} - C_{2} A_{n+1})-C_{1} I_{n+1} 
+ \alpha A_{n+1} + \omega C_{n+1}\right)\\
&= \psi \left(\tilde{\lambda}_{n+1}S_{n+2} + \left( \frac{\omega \phi}{C_{3}}
+\frac{\alpha \rho}{C_{2}}-C_{1}\right)I_{n+1}\right)\\
&=\psi \left(\tilde{\lambda}_{n+1}S_{n+2} + \left( C_{2} \omega \phi 
+ C_{3} \alpha \rho-C_{1} C_{2} C_{3}\right)\frac{I_{n+1}}{C_{2}C_{3}}\right)
=\psi\left( \tilde{\lambda}_{n+1}S_{n+2} - \mathcal{D} \frac{I_{n+1}}{C_{2}C_{3}}\right).
\end{align*}
Since
\begin{equation*}
\tilde{\lambda}_{n+1}S_{n+2} \leq \beta(I_{n+1}+\eta_{C}C_{n+1}
+\eta_{A} A_{n+1})\leq \beta \left(I_{n+1}+ \eta_{C} 
\frac{\phi I_{n+1}}{C_{3}}+ \eta_{A} \frac{\rho I_{n+1}}{C_{2}}\right),
\end{equation*}
we have 
\begin{align*}
V(n+1)-V(n)
&\leq \psi \left( \beta \left(I_{n+1}+ \eta_{C} \frac{\phi I_{n+1}}{C_{3}}
+ \eta_{A} \frac{\rho I_{n+1}}{C_{2}}\right) - \mathcal{D} \frac{I_{n+1}}{C_{2}C_{3}}  \right)\\
&=\frac{\psi I_{n+1}}{C_{2}C_{3}}\left(  \beta C_{2}C_{3} 
+\beta\eta_{C} C_{2}\phi +\beta \eta_{A} C_{3} \rho  -\mathcal{D} \right)\\
&=\frac{\psi I_{n+1}}{C_{2}C_{3}}\mathcal{D}(\mathcal{R}_{0}-1).
\end{align*}
If $\mathcal{R}_{0}<1$, and because $\varepsilon$ is arbitrary, 
we conclude that $V(n+1)-V(n) \leq 0$ and $\underset{n\to \infty}{\lim} I_{n}=0$ 
for any $n \geq 0$. The sequence $\{ V(n)\}_{n=0}^{+ \infty}$ 
is monotone decreasing and $\underset{n\to \infty}{\lim} S_{n}=\frac{\Lambda}{\mu}$.
\end{proof}

\begin{theorem}
\label{globalEE}
If $\mathcal{R}_{0}>1$, then the endemic equilibrium point 
of the discrete-time SICA model \eqref{eq:discmodel} 
is globally asymptotically stable.
\end{theorem}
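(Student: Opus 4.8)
The plan is to construct a discrete Volterra-type Lyapunov function and to exploit the elementary inequality $\ln u \le u-1$ (valid for $u>0$, with equality iff $u=1$) in the form $\ln\frac{x_{n+1}}{x_{n}}\ge 1-\frac{x_{n}}{x_{n+1}}$, which plays the role of the chain rule used in the continuous setting. Writing $E^{*}=(S^{*},I^{*},C^{*},A^{*})$ for the endemic equilibrium, whose positivity is guaranteed by $\mathcal{R}_{0}>1$ through \eqref{lambda}, I would first record the equilibrium identities obtained by setting the increments in \eqref{eq:discmodel} to zero, namely $\Lambda=\lambda^{*}S^{*}+\mu S^{*}$, $\lambda^{*}S^{*}=C_{1}I^{*}-\alpha A^{*}-\omega C^{*}$, $\phi I^{*}=C_{3}C^{*}$ and $\rho I^{*}=C_{2}A^{*}$, together with $\lambda^{*}=\frac{\beta}{N^{*}}(I^{*}+\eta_{C}C^{*}+\eta_{A}A^{*})$. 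These relations will be used repeatedly to cancel the constant and linear parts of the increment.

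Next I would introduce the candidate
\begin{equation*}
V(n)=\Bigl(S_{n}-S^{*}\ln S_{n}\Bigr)+\Bigl(I_{n}-I^{*}\ln I_{n}\Bigr)
+\frac{\omega}{C_{3}}\Bigl(C_{n}-C^{*}\ln C_{n}\Bigr)
+\frac{\alpha}{C_{2}}\Bigl(A_{n}-A^{*}\ln A_{n}\Bigr),
\end{equation*}
the weights $\omega/C_{3}$ and $\alpha/C_{2}$ being chosen exactly as in the proof of Theorem~\ref{globalDFE}, so that the coupling coefficients between the $I$, $C$ and $A$ compartments telescope. By positivity (Theorem~\ref{positiv}) every logarithm is well defined, and $V$ attains its minimum at $E^{*}$ with $V(n)\ge V(E^{*})$, since each summand has the form $x^{*}g(x/x^{*})$ with $g(u)=u-1-\ln u\ge 0$.

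The core computation is the sign of $\Delta V(n)=V(n+1)-V(n)$. For each compartment I would apply the logarithmic bound to obtain, for instance, $(S_{n+1}-S^{*}\ln S_{n+1})-(S_{n}-S^{*}\ln S_{n})\le (S_{n+1}-S_{n})\bigl(1-\frac{S^{*}}{S_{n+1}}\bigr)$, and likewise for $I$, $C$, $A$. Substituting the right-hand sides of \eqref{eq:discmodel} for the increments $x_{n+1}-x_{n}=\psi(\cdots)$, factoring out $\psi>0$, and eliminating $\Lambda$, $C_{1}$, $C_{2}$, $C_{3}$ through the equilibrium identities, the $\mu$-part of the $S$-increment produces the negative-definite term $-\psi\mu\frac{(S_{n+1}-S^{*})^{2}}{S_{n+1}}$, while the remaining contributions collapse into incidence blocks of the form ``$1-\text{ratio}+\ln\text{ratio}$''. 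The goal is to show $\Delta V(n)\le -\psi\mu\frac{(S_{n+1}-S^{*})^{2}}{S_{n+1}}-\psi S^{*}\lambda^{*}\,\Phi_{n}$, where $\Phi_{n}$ is a nonnegative combination of $g(\cdot)$ terms, so that $\Delta V(n)\le 0$.

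The hard part is the standard (frequency-dependent) incidence $\tilde{\lambda}_{n}=\frac{\beta}{N_{n}}(I_{n}+\eta_{C}C_{n}+\eta_{A}A_{n})$, whose denominator $N_{n}$ couples all four compartments and whose mixed time levels (old values $I_{n},C_{n},A_{n},N_{n}$ against $S_{n+1}$) obstruct a term-by-term cancellation. To handle it I would normalize each infectious contribution against its equilibrium value, expressing the incidence block through ratios such as $\frac{S^{*}I_{n+1}N^{*}}{S_{n+1}I^{*}N_{n+1}}$ and their analogues, and then invoke the arithmetic--geometric mean inequality: each group of ratios whose product equals $1$ is bounded below by its cardinality, and the logarithmic terms supplied by the Volterra function absorb exactly the excess, forcing the block to be $\le 0$ with equality only at $E^{*}$. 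Positivity and boundedness (Theorems~\ref{positiv} and \ref{conservation}) keep $N_{n}$ bounded away from $0$ and $\infty$, so all ratios are finite. Finally, $\{V(n)\}$ is monotone decreasing and bounded below, hence convergent, so $\Delta V(n)\to 0$; the equality case of the inequalities then forces $(S_{n},I_{n},C_{n},A_{n})\to E^{*}$, and a discrete LaSalle argument yields global asymptotic stability whenever $\mathcal{R}_{0}>1$.
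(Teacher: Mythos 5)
Your proposal follows essentially the same route as the paper: your Volterra function $V(n)$ is, up to the positive constant factor $\psi S^{\ast}I^{\ast}$ and an additive constant, exactly the paper's $\tilde{V}(n)=\frac{1}{\psi I^{\ast}}g\bigl(\frac{S_{n}}{S^{\ast}}\bigr)+\frac{1}{\psi S^{\ast}}g\bigl(\frac{I_{n}}{I^{\ast}}\bigr)+\frac{\omega C^{\ast}}{\psi C_{3}S^{\ast}I^{\ast}}g\bigl(\frac{C_{n}}{C^{\ast}}\bigr)+\frac{\alpha A^{\ast}}{\psi C_{2}S^{\ast}I^{\ast}}g\bigl(\frac{A_{n}}{A^{\ast}}\bigr)$, and both arguments rest on the same ingredients: the inequality $\ln\frac{x_{n+1}}{x_{n}}\ge 1-\frac{x_{n}}{x_{n+1}}$ yielding $g\bigl(\frac{x_{n+1}}{x^{\ast}}\bigr)-g\bigl(\frac{x_{n}}{x^{\ast}}\bigr)\le\frac{(x_{n+1}-x^{\ast})(x_{n+1}-x_{n})}{x_{n+1}x^{\ast}}$, the equilibrium identities, the negative-definite term in $(S_{n+1}-S^{\ast})^{2}$, the absorption of the incidence ratio blocks by $g$-terms (the paper's $\frac{I^{\ast}C_{n+1}}{I_{n+1}C^{\ast}}+\frac{C^{\ast}I_{n+1}}{C_{n+1}I^{\ast}}-2\ge 0$ groupings are precisely your AM--GM step), and the monotone-decreasing, bounded-below sequence conclusion. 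This is the paper's proof in all essentials, so there is nothing substantive to distinguish.
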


\begin{proof}
We construct a sequence $\{ \tilde{V}(n)\}_{n=1}^{+\infty}$ of the form
\begin{equation*}
\tilde{V}(n)=\frac{1}{\psi I^{\ast}}g\left(\frac{S_{n}}{S^{\ast}}\right)
+\frac{1}{\psi S^{\ast}}g\left(\frac{I_{n}}{I^{\ast}}\right)
+\frac{\omega C^{\ast}}{\psi C_{3} S^{\ast} I^{\ast}}g\left(\frac{C_{n}}{C^{\ast}}\right)
+ \frac{\alpha A^{\ast}}{\psi C_{2} S^{\ast} I^{\ast}}g\left(\frac{A_{n}}{A^{\ast}}\right),
\end{equation*}
where $g(x)=x-1-\ln(x)$, $x \in \R^{+}$. Clearly, $g(x)\geq 0$ 
with equality holding true only if $x=1$. We have
\begin{align*}
g\left(\frac{S_{n+1}}{S^{\ast}}\right)-g\left(\frac{S_{n}}{S^{\ast}}\right)
&=\frac{S_{n+1}-S_{n}}{S^{\ast}}-\ln\left(\frac{S_{n+1}}{S_{n}}\right)
\leq\frac{(S_{n+1}-S^{\ast})(S_{n+1}-S_{n})}{S_{n+1}S^{\ast}}\\
&=\frac{S_{n+1}-S^{\ast}}{S_{n+1}S^{\ast}} \psi \left( \Lambda 
- \tilde{\lambda}_{n} S_{n+1} - \mu S_{n+1} \right)\\
&=\frac{S_{n+1}-S^{\ast}}{S_{n+1}S^{\ast}} \psi \left( \tilde{\lambda}^{\ast} S^{\ast} 
+ \mu S^{\ast}- \tilde{\lambda}_{n} S_{n+1} - \mu S_{n+1} \right)\\
&=-\frac{\mu \psi (S_{n+1}-S^{\ast})^{2}}{S_{n+1}S^{\ast}}
-\psi \tilde{\lambda}^{\ast} 
\left( 1- \frac{S^{\ast}}{S_{n+1}}\right)\left(
\frac{\tilde{\lambda}_{n}}{\tilde{\lambda}^{\ast}}\frac{S_{n+1}}{S^{\ast}} -1\right).
\end{align*}
Similarly,
\begin{align*}
g\left(\frac{I_{n+1}}{I^{\ast}}\right)-g\left(\frac{I_{n}}{I^{\ast}}\right)
&=\frac{I_{n+1}-I_{n}}{I^{\ast}}-\ln\left(\frac{I_{n+1}}{I_{n}}\right)
\leq\frac{(I_{n+1}-I^{\ast})(I_{n+1}-I_{n})}{I_{n+1}I^{\ast}}\\
&\leq \frac{(I_{n+1}-I^{\ast})}{I_{n+1}I^{\ast}}\left( \tilde{\lambda}_{n} 
S_{n+1} - C_{1}I_{n+1} + \alpha A_{n+1}  + \omega C_{n+1} \right)\\
&= \frac{(I_{n+1}-I^{\ast})}{I_{n+1}I^{\ast}} \left(  \tilde{\lambda}_{n} S_{n+1}  
- \frac{I_{n+1}  \tilde{\lambda}^{\ast}S^{\ast}}{I^{\ast}}-\frac{\alpha 
I_{n+1} A^{\ast}}{I^{\ast}}-\frac{\omega I_{n+1} C^{\ast}}{I^{\ast}}
+ \alpha A_{n+1}  + \omega C_{n+1} \right)\\
&= \left( 1-\frac{I^{\ast}}{I_{n+1}}\right)\left(\frac{\psi 
A^{\ast} \alpha}{I^{\ast}} \left( \frac{A_{n+1}}{A^{\ast}}
- \frac{I_{n+1}}{I^{\ast}}\right)+\frac{\psi \omega 
C^{\ast}}{I^{\ast}} \left( \frac{C_{n+1}}{C^{\ast}}
-\frac{I_{n+1}}{I^{\ast}}\right)\right)\\
&\quad + \left( 1-\frac{I^{\ast}}{I_{n+1}}\right)\frac{\psi  
S^{\ast}\tilde{\lambda}^{\ast}}{I^{\ast}} 
\left( \frac{\tilde{\lambda}_{n}}{\tilde{\lambda}^{\ast}}
\frac{S_{n+1}}{S^{\ast}} -\frac{I_{n+1}}{I^{\ast}}\right),
\end{align*}
\begin{align*}
g\left(\frac{C_{n+1}}{C^{\ast}}\right)-g\left(\frac{C_{n}}{C^{\ast}}\right)
&=\frac{C_{n+1}-C_{n}}{S^{\ast}}-\ln\left(\frac{C_{n+1}}{C_{n}}\right)
\leq\frac{(C_{n+1}-C^{\ast})(C_{n+1}-C_{n})}{C_{n+1}C^{\ast}}\\
&\leq \frac{(C_{n+1}-C^{\ast})}{C_{n+1}C^{\ast}} \left(\phi I_{n+1} 
- C_{3}C_{n+1} \right)=C_{3}\psi \left(1-\frac{C^{\ast}}{C_{n+1}}\right) 
\left( \frac{I_{n+1}}{I^{\ast}}-\frac{C_{n+1}}{C^{\ast}}\right),
\end{align*}
\begin{align*}
g\left(\frac{A_{n+1}}{A^{\ast}}\right)-g\left(\frac{A_{n}}{A^{\ast}}\right)
&=\frac{A_{n+1}-A_{n}}{A^{\ast}}-\ln\left(\frac{A_{n+1}}{A_{n}}\right)
\leq\frac{(A_{n+1}-A^{\ast})(A_{n+1}-A_{n})}{A_{n+1}A^{\ast}}\\
&\leq \frac{(A_{n+1}-A^{\ast})}{A_{n+1}I^{\ast}} \left( 
\rho  I_{n+1} - C_{2} A_{n+1} \right)=C_{2}\psi \left(1
-\frac{A^{\ast}}{A_{n+1}}\right) \left( 
\frac{I_{n+1}}{I^{\ast}}-\frac{A_{n+1}}{A^{\ast}}\right).
\end{align*}
The difference of $\tilde{V}(n)$ satisfies
\begin{align*}
\tilde{V}(n+1)-\tilde{V}(n)&=\frac{1}{\psi
I^{\ast}}\left(g\left(\frac{S_{n+1}}{S^{\ast}}\right)
-g\left(\frac{S_{n}}{S^{\ast}}\right)\right)
+\frac{1}{\psi S^{\ast}}\left(g\left(\frac{I_{n+1}}{I^{\ast}}\right)
-g\left(\frac{I_{n}}{I^{\ast}}\right)\right)\\
&\quad + \frac{\omega C^{\ast}}{\psi C_{3} S^{\ast}
I^{\ast}}\left(g\left(\frac{C_{n+1}}{C^{\ast}}\right)
-g\left(\frac{C_{n}}{C^{\ast}}\right)\right)
+ \frac{\alpha A^{\ast}}{\psi C_{2} S^{\ast} 
I^{\ast}}\left(g\left(\frac{A_{n+1}}{A^{\ast}}\right)
-g\left(\frac{A_{n}}{A^{\ast}}\right)\right)\\
&\leq -\frac{\mu (S_{n+1}-S^{\ast})^{2}}{I^{\ast} S^{\ast} S_{n+1}}
-\frac{\tilde{\lambda}^{\ast}}{I^{\ast}}\left(\frac{I^{\ast}}{I_{n+1}}
\frac{\tilde{\lambda}_{n}}{\tilde{\lambda}^{\ast}}\frac{S_{n+1}}{S^{\ast}}
-2-\frac{\tilde{\lambda}_{n}}{\tilde{\lambda}^{\ast}}
+\frac{S^{\ast}}{S_{n+1}}+\frac{I_{n+1}}{I^{\ast}}\right)\\
&\quad - \frac{\omega C^{\ast}}{S^{\ast} I^{\ast}}\left( 
\frac{I^{\ast} C_{n+1}}{I_{n+1} C^{\ast}} + \frac{C^{\ast} I_{n+1}}{C_{n+1} I^{\ast}} 
-2\right)-\frac{\alpha A^{\ast}}{S^{\ast} I^{\ast}} 
\left( \frac{I^{\ast} A_{n+1}}{I_{n+1} A^{\ast}} 
+\frac{A^{\ast}I_{n+1}}{A_{n+1} I^{\ast}}-2\right)\\
&\leq  -\frac{\mu (S_{n+1}-S^{\ast})^{2}}{I^{\ast} S^{\ast} S_{n+1}}
-\frac{\tilde{\lambda}^{\ast}}{I^{\ast}}\left( g\left( 
\frac{S^{\ast}}{S_{n+1}}\right)+g\left(\frac{I_{n+1}}{I^{\ast}} \right)
+ g\left( \frac{S_{n+1} \tilde{\lambda}_{n}I^{\ast}}{S^{\ast} 
\tilde{\lambda}^{\ast} I_{n+1}}\right)
-g\left(\frac{\tilde{\lambda}_{n}}{\tilde{\lambda}^{\ast}}\right)\right)\\
&\quad -\frac{\omega C^{\ast}}{S^{\ast} I^{\ast}} \left(  
g\left(\frac{I^{\ast} C_{n+1}}{I_{n+1} C^{\ast}} \right) 
+ g\left( \frac{C^{\ast} I_{n+1}}{C_{n+1} I^{\ast}}\right)\right)
-\frac{\alpha A^{\ast}}{S^{\ast} I^{\ast}}\left( 
g\left( \frac{I^{\ast} A_{n+1}}{I_{n+1} A^{\ast}}\right) 
+ g\left( \frac{A^{\ast} I_{n+1}}{A_{n+1} I^{\ast}}\right)\right).
\end{align*}
Therefore, $\{ \tilde{V}(n)\}_{n=1}^{+\infty}$ 
is a monotone decreasing sequence for any $n \geq 0$.
Since $ \tilde{V}(n) \geq 0$ and
$\underset{n \to \infty}{\lim}\left( \tilde{V}(n+1) -\tilde{V}(n)\right)=0$, 
we obtain that $\underset{n \to \infty}{\lim} S_{n+1}=S^{\ast}$, 
$\underset{n \to \infty}{\lim} I_{n+1}=I^{\ast}$, 
$\underset{n \to \infty}{\lim} C_{n+1}=C^{\ast}$ 
and $\underset{n \to \infty}{\lim} A_{n+1}=A^{\ast}$. 
This completes the proof.
\end{proof}

% ------------------------------------------------------

\section{Numerical simulations}
\label{sec4}

In this section, we apply our discrete model to a case study of Cape Verde 
\cite{MyID:455,MyID:359}. The data is the same of \cite{MyID:455} 
and the parameters too. We present here a resume of the information. 

Since the first diagnosis of AIDS in 1986, Cape Verde try to fight, 
prevent, and treat HIV/AIDS \cite{MyID:359,RCV}. In Table~\ref{dataCV}, 
the cumulative cases of infection by HIV and AIDS 
in Cape Verde from 1987 to 2014 is given.
% ---------------------------------------
\begin{table}[htp]
\setlength{\tabcolsep}{3mm}
\caption{Cumulative cases of infection by HIV/AIDS and the total population 
in Cape Verde in the period from 1987 to 2014 \cite{MyID:455,MyID:359,RCV}.}
\begin{center}
\begin{tabular}{cccccccc}\hline
\textit{Year} & \textit{1987} & \textit{1988} & \textit{1989} & \textit{1990} 
& \textit{1991} & \textit{1992} & \textit{1993}\\ \hline
\textit{HIV/AIDS} & 61&107&160&211&244&303&337\\
\textit{Population}&323972&328861&334473&341256&349326&358473&368423\\ \hline
\textit{Year} & \textit{1994} & \textit{1995} & \textit{1996} 
& \textit{1997} & \textit{1998} & \textit{1999} & \textit{2000}\\ \hline
\textit{HIV/AIDS}&358&395&432&471&560&660&779\\
\textit{Population}&378763&389156&399508&409805&419884&429576&438737\\ \hline
\textit{Year} & \textit{2001} & \textit{2002} & \textit{2003} 
& \textit{2004} & \textit{2005} & \textit{2006} & \textit{2007}\\ \hline
\textit{HIV/AIDS}&913&1064&1233&1493&1716&2015&2334\\
\textit{Population}&447357&455396&462675&468985&474224&478265&481278\\ \hline
\textit{Year} & \textit{2008} & \textit{2009} & \textit{2010} 
&\textit{2011} & \textit{2012} & \textit{2013} & \textit{2014}\\ \hline
\textit{HIV/AIDS}&2610&2929&3340&3739&4090&4537&4946\\
\textit{Population}&483824&486673&490379&495159&500870&507258&513906\\ \hline
\end{tabular}
\end{center}
\label{dataCV}
\end{table}
% ---------------------------------------
Based on \cite{RCV,WBD}, the values for the initial 
conditions are taken as
\begin{equation}
S_{0}=323911,
\quad I_{0}=61,
\quad C_{0}=0,
\quad A_{0}=0.
\end{equation}
Regarding the parameter values, we consider $\rho=0.1$ \cite{SGS} 
and $\gamma=0.33$ \cite{BGW}. It is assumed that, after one year, 
HIV infected individuals, $I$, that are under ART treatment, 
have low viral load \cite{PEC} and are transferred to class $C$, 
so that $\phi=1$. The ART treatment therapy takes a few years. 
Following \cite{MyID:359}, it is assumed the default treatment rate 
to be 11 years ($1 / \omega$ years, to be precise). 
Based in \cite{ZE}, the induced death rate by AIDS is $d=1$. 
From the World Bank data \cite{WBD,WBDP}, the natural rate 
is assumed to be $\mu=1/69.54$. The recruitment rate 
$\Lambda=13045$ was estimated in order to approximate the values 
of the total population of Cape Verde, see Table~\ref{dataCV}. 
Based on a research study known as HPTN 052, 
where it was found that the risk of HIV transmission among heterosexual 
serodiscordant is $96\%$ lower when the HIV-positive partner is on treatment \cite{CC},  
we take here $\eta_{C}=0.04$, which means that HIV infected individuals 
under ART treatment have a very low probability 
of transmitting HIV \cite{Romero}. For the parameter  
$\eta_{A} \geq 1$, which accounts the relative infectiousness of individuals 
with AIDS symptoms, in comparison to those infected with HIV with 
no AIDS symptoms, we assume, based on \cite{WLG}, that $\eta_{A}=1.35$. 
For $(\eta_{C}, \eta_{A})=(0.04,1.35)$, the estimated value 
of the HIV transmission rate is equal to $\beta = 0.695$.
Using these parameter values, the basic reproduction number 
is $\mathcal{R}_{0}=4.5304$ and the endemic equilibrium point 
$(S^{\ast}, I^{\ast}, C^{\ast}, A^{\ast})=(145276, 48136.4, 461146, 3580.57)$.
In Figure~\ref{fig1}, we show graphically the cumulative cases of infection 
by HIV/AIDS in Cape Verde given in Table~\ref{dataCV}, together with the curves 
obtained from the continuous-time SICA model \eqref{eq:model}
and our discrete-time SICA model \eqref{eq:discmodel}.
Our simulations of the continuous and discrete models 
were done with the help of the software Wolfram \textsf{Mathematica}, version 12.1. 
For the continuous model, we have used the command \texttt{NSolve}, 
that computes the solution by interpolation functions. 
Our implementation for the discrete case makes use of the 
\textsf{Mathematica} command \texttt{RecurrenceTable}.

% ------------------------------------------------------------------
\begin{figure}[H]
\centering
\includegraphics[scale=0.7]{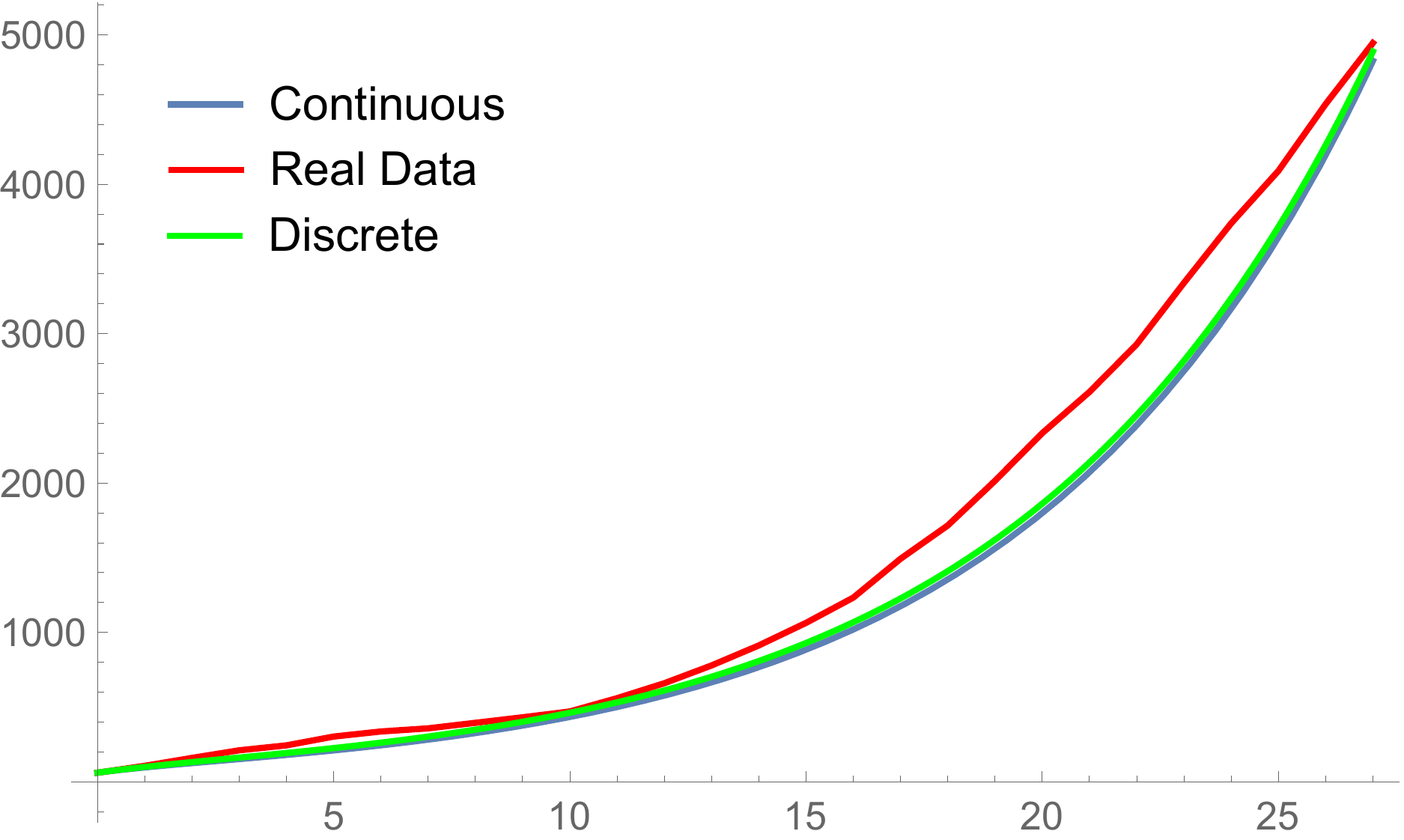}
\caption{Cumulative cases of infection by HIV/AIDS in Cape Verde 
in the period from 1987 (year 0) to 2014 (year 27): real data (red); 
prediction from the continuous-time SICA model \eqref{eq:model} (blue); 
and prediction from our discrete-time SICA model \eqref{eq:discmodel} (green).}
\label{fig1}
\end{figure}

% ------------------------------------------------------------------

To illustrate the global stability of the endemic equilibrium (EE),
predicted by our Theorem~\ref{globalEE}, we consider different 
initial conditions borrowed from \cite{MyID:455}, 
from different regions of the plane:
\begin{equation}
\label{eq:init:cond:NS}
\begin{split}
\text{(SD1,ID1,CD1,AD1)}&=(S_{0},I_{0},C_{0},A_{0}),\\
\text{(SD2,ID2,CD2,AD2)}&=(S_{0}/2,I_{0} +S_{0}/2,C_{0}+10^{4},A_{0}+4 \times10^{4}),\\
\text{(SD3,ID3,CD3,AD3)}&=(S_{0}/3,I_{0},C_{0}+4 \times 10^{4},A_{0}+S_{0}/3),\\
\text{(SD4,ID4,CD4,AD4)}&=(3 S_{0}/2,I_{0}+S_{0}/4,C_{0}+5 \times 10^{5},A_{0}+S_{0}/5).
\end{split}
\end{equation}
The obtained results are given in Figure~\ref{fig2}.
% ----------------------------------------------------
\begin{figure}[H]
\centering
\begin{subfigure}{7.5cm}
\includegraphics[scale=0.5]{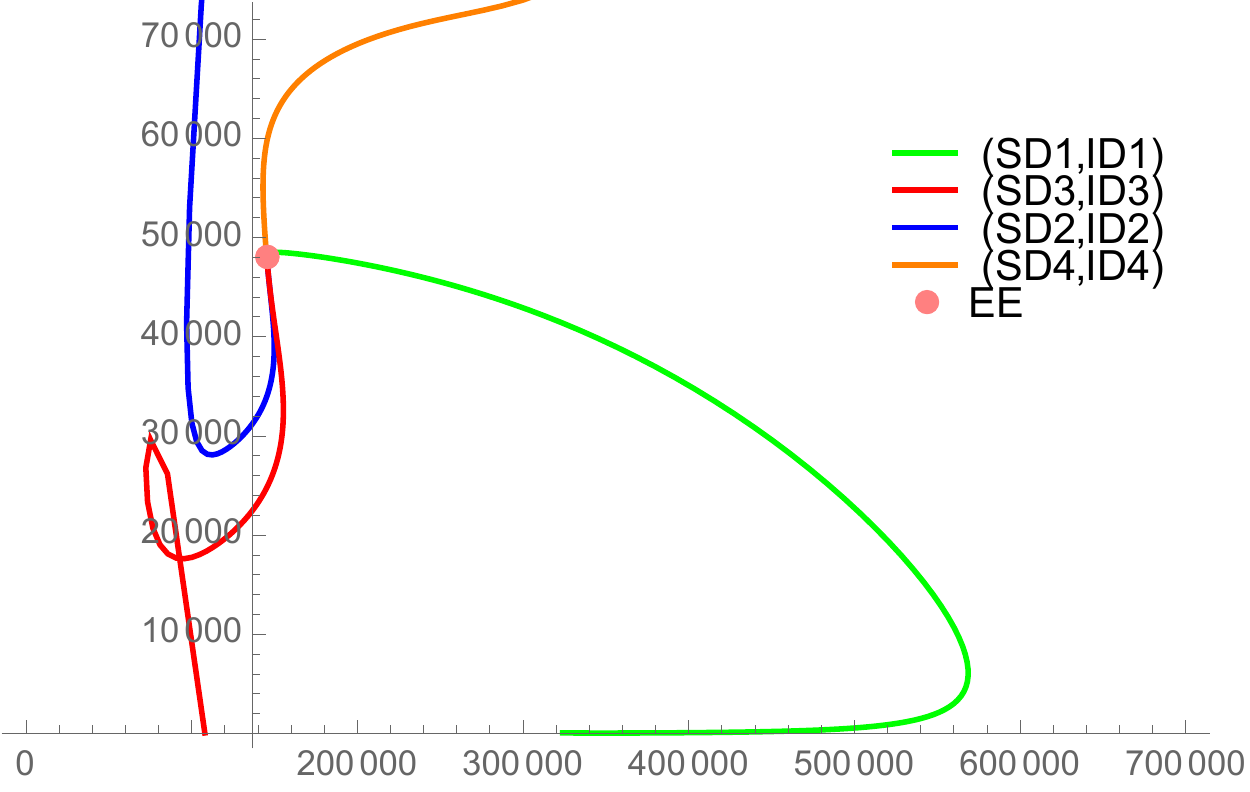}
\caption{HIV-infected individuals with no clinical symptoms ($I$)
versus Susceptible individuals ($S$).}
\end{subfigure}
\quad
\begin{subfigure}{7.5cm}
\includegraphics[scale=0.35]{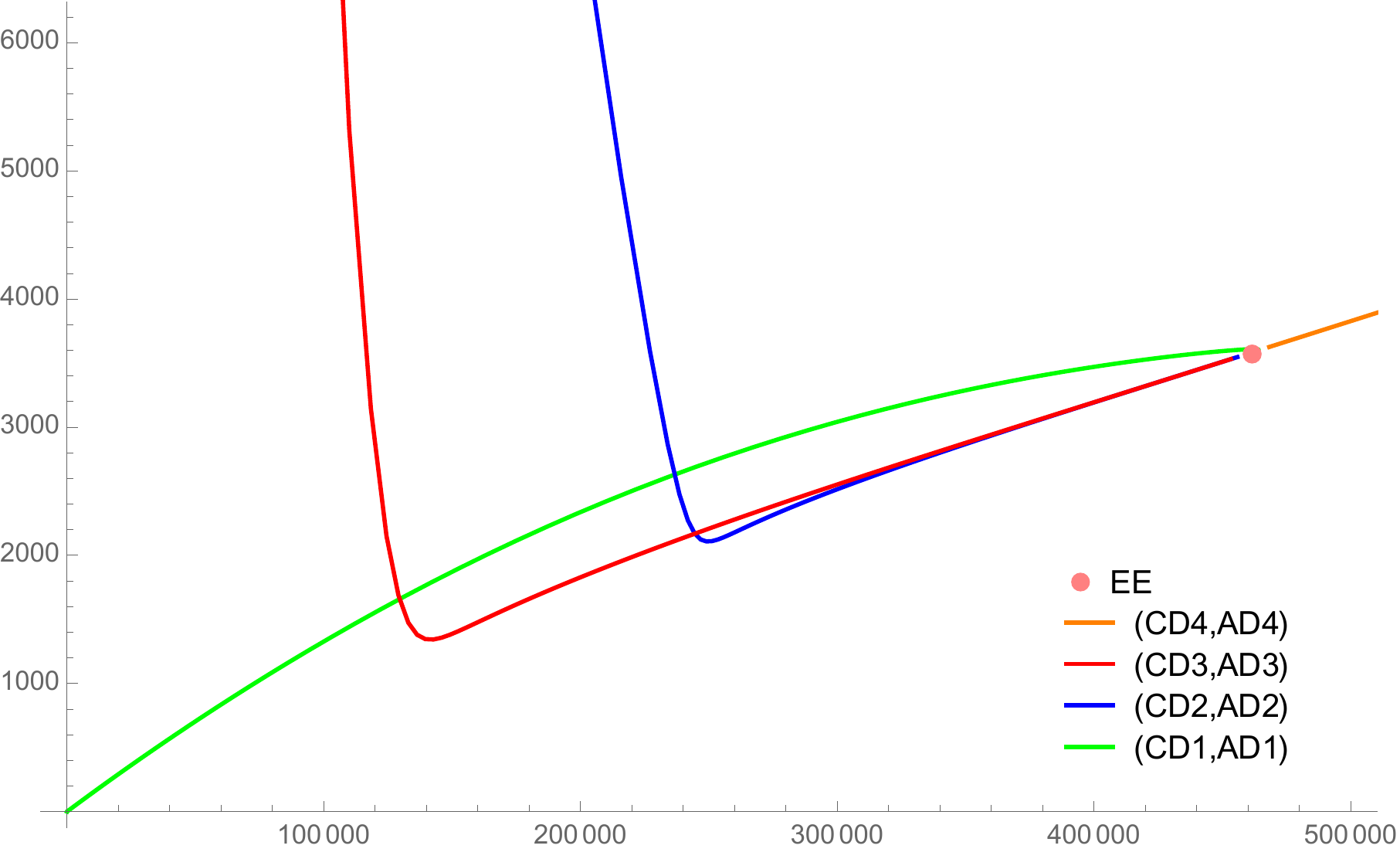}
\caption{HIV-infected individuals with AIDS ($A$)
versus HIV-infected individuals under treatment ($C$).}
\end{subfigure}
\caption{Illustration of the fact that the endemic equilibrium point 
$(S^{\ast}, I^{\ast}, C^{\ast}, A^{\ast})=(145276, 48136.4, 461146, 3580.57)$
of the discrete-time SICA model \eqref{eq:discmodel} 
is globally asymptotically stable. This is in agreement with
Theorem~\ref{globalEE}, since $\mathcal{R}_{0}=4.5304 > 1$.}
\label{fig2}
\end{figure}

% ------------------------------------------------------

\section{Conclusions}
\label{sec5}

In this work, we proposed a discrete-time SICA model, 
using Mickens' nonstandard finite difference 
(NSFD) scheme. The elementary 
stability was studied and the global stability of the equilibrium points proved. 
Finally, we made some numerical simulations, comparing our discrete model with 
the continuous one. For that, we have used the same data, following
the case study of Cape Verde. Our conclusion is that the discrete model 
can be used with success to describe the reality of Cape Verde,
as well as to properly approximate the continuous model. 
All our simulations have been done using 
the numerical computing environment \emph{Mathematica}, version 12.1, 
running on an Apple MacBook Pro i5 2.5 GHz with 16Gb of RAM. 
The solutions of the models were found in ``real time''.

Mickens was a pioneer in NSFD schemes. Throughout the years, 
other NSFD schemes were developed. Roughly speaking, different 
Mickens-type methods differ on the denominator functions
and the discretization, depending on concrete conditions 
that the continuous model under study must satisfy.
In \cite{MR4141413}, the incidence rate is combined, 
while in \cite{MR3093413} all parameters are constant. 
Other types of NSFD are presented, e.g., in 
\cite{MR3575285,MR3316778,MR2854820,MR2316130},
which can be used if the system satisfy some conditions 
and a suitable denominator function is constructed. For such schemes, 
the incidence functions are different from ours. 
In \cite{MR3316778}, for example, it is fundamental to rewrite the system 
and the discretization method and the denominator function are different from the ones
we use here. The article \cite{MR2854820} uses the same approach of 
\cite{MR3316778} and the model has a bilinear incidence function. 
Positive and elementary stable nonstandard
finite-difference methods are also considered in \cite{MR2316130}.
Mickens set the field, but several different authors developed 
and are developing other related discretization methods.  
For the SICA model, however, as we have shown, a new NSFD scheme is not necessary 
and the standard Mickens' method provides a well posed discrete-time
model with excellent results, without the need to impose 
additional conditions to the model. 

% ------------------------------------------------------

\section*{Acknowledgments}

The authors were partially supported by 
the Portuguese Foundation for Science and Technology (FCT):
Sandra Vaz through the Center of Mathematics and Applications 
of \emph{Universidade da Beira Interior} (CMA-UBI), 
project UIDB/00212/2020; Delfim F. M. Torres through
the Center for Research and Development in Mathematics 
and Applications (CIDMA), project UIDB/04106/2020.
They are very grateful to three anonymous referees, 
who kindly reviewed an earlier version of this manuscript 
and provided valuable suggestions and comments.

% ------------------------------------------------------

\section*{Conflict of interest}

The authors declare that they have no conflicts of interest.

% ------------------------------------------------------

% ------------------------------------------------------

\end{document}